\DeclareMathOperator{\link}{link}
\newcommand{\ideal}[1]{\left(#1 \right)}
\newcommand{\abs}[1]{\lvert #1\rvert}
\newcommand{\restrict}[2]{#1\lvert_{#2}}
\newcommand{\skeleton}[2]{\left[#2\right]_{#1}}
\DeclareMathOperator{\pnt}{\raise 0.5mm \hbox{\large\textbf{.}}}
\DeclareMathOperator{\rk}{rank}
\DeclareMathOperator{\init}{init}
\newcommand{\note}[2][ ]{}
\newtheorem{theorem}{Theorem}[section]
\newtheorem{lemma}[theorem]{Lemma}
\newtheorem{proposition}[theorem]{Proposition}
\newtheorem{conjecture}[theorem]{Conjecture}
\theoremstyle{definition}
\newtheorem{remark}[theorem]{Remark}
\newtheorem{example}[theorem]{Example}
\newtheorem*{notation}{Notation}
\title{Pure \texorpdfstring{$O$}{O}-sequences and matroid
  \texorpdfstring{\lowercase{$h$}}{h}-vectors}
\author{Huy T\`ai H\`a}
\address{Department of Mathematics\\
Tulane University\\
6823 St. Charles Ave., New Orleans, LA 70118}
\email{tha@tulane.edu}
\author{Erik Stokes}
\address{Department of Mathematics\\
Rose-Hulman Institute of Technology\\
Terre Haute, IN 47803}
\email{stokes.erik@gmail.com}
\author{Fabrizio Zanello} \address{Department of Mathematics\\ MIT\\ Cambridge, MA 02139-4307, {\tiny and}}
\address{Department of Mathematical  Sciences\\ Michigan Tech\\ Houghton, MI  49931-1295}
\email{zanello@math.mit.edu}
\thanks{2010 \emph{Mathematics Subject Classification.} Primary:
  05B35. Secondary: 05E45, 05E40, 13H10, 13D40.}
\keywords{Pure $O$-sequence. Matroid complex. $h$-vector. Order
  ideal. Simplicial complex. Interval property.
  Monomial algebra.  Complete intersection. Differentiable $O$-sequence.}
\begin{document}



\begin{abstract}
We study Stanley's long-standing conjecture that the $h$-vectors of
matroid simplicial complexes are pure $O$-sequences. Our method
consists of a new and more abstract approach, which shifts the focus
from working on constructing suitable artinian level monomial ideals,
as often done in the past, to the study of properties of pure
$O$-sequences. We propose a conjecture on pure $O$-sequences and
settle it in small socle degrees. This allows us to prove Stanley's
conjecture for all matroids of rank 3. At the end of the paper, using our method, we discuss a first possible approach to Stanley's conjecture in full generality. Our technical work on pure
$O$-sequences also uses very recent results of the third author and collaborators.
\end{abstract}

\maketitle

\section{Introduction}

Matroids appear in many different areas of mathematics, often in
surprising or unexpected ways \cite{NN,Ox,Wh,Wh2}. Finite matroids can
be naturally identified with a class of finite simplicial complexes,
and are therefore also objects of great interest in combinatorial
commutative algebra and algebraic combinatorics. The algebraic theory
of matroids and the theory of pure $O$-sequences both began with
Stanley's seminal work \cite{St1}. The goal of this paper is to
contribute to the study of an intriguing connection
between these two fields conjectured by Stanley:

\begin{conjecture}[\cite{St1,St3}] \label{conj:stanley}
The $h$-vector of a matroid complex is a pure $O$-sequence.
\end{conjecture}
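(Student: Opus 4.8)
The plan is to avoid the classical strategy---building, matroid by matroid, an artinian level monomial ideal whose Hilbert function reproduces the prescribed $h$-vector---and instead to recast the problem as a purely numerical question about pure $O$-sequences. The point of departure is that the independence complex of a matroid is pure and shellable, hence Cohen--Macaulay, so its $h$-vector $(1,h_1,\dots,h_r)$ is automatically an $O$-sequence; all the difficulty resides in the word \emph{pure}. The first step is therefore to assemble the numerical constraints that a matroid $h$-vector is known (or expected) to satisfy beyond being an $O$-sequence---most notably that it is \emph{differentiable} (its first difference is again an $O$-sequence) and that it obeys flawless-type symmetry inequalities $h_i \le h_{r-i}$---and then to ask whether these constraints by themselves already force purity.

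This suggests isolating a conjecture purely about pure $O$-sequences, engineered so that, once granted, it delivers Stanley's conjecture. The natural candidate is an \emph{interval property}: if two pure $O$-sequences agree in all but one entry, then every sequence lying numerically between them is again a pure $O$-sequence. The strategy is then to realize, for each rank and each admissible $h_1$, two \emph{extremal} matroid-type $h$-vectors directly as pure $O$-sequences---the complete intersection and the compressed profiles are the natural extremes, the former because a monomial complete intersection is Gorenstein with one-dimensional socle---and to let the interval property interpolate every admissible vector in between, so that no individual ideal ever has to be written down.

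To extract an unconditional theorem I would prove this pure-$O$-sequence conjecture in small socle degree. When the socle degree is at most $3$ the underlying order ideals are short enough that the admissible region can be described explicitly and the interval property verified by elementary generating-set manipulations, together with the recent structural results on pure $O$-sequences invoked in the paper. Since a rank $3$ matroid has $h$-vector $(1,h_1,h_2,h_3)$ of socle degree $3$, this settles Conjecture~\ref{conj:stanley} for all rank $3$ matroids: one checks that the matroid constraints (an $O$-sequence that is differentiable and flawless, so in particular $h_1 \le h_2$, together with the Macaulay bound on $h_3$) place every such vector inside the region just shown to consist of pure $O$-sequences.

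The hard part, which I expect to be the genuine obstacle, is the passage to arbitrary rank, and it is twofold. On the matroid side there is no usable description of exactly which $O$-sequences arise as $h$-vectors of rank $r$ matroids, so even a correct pure-$O$-sequence conjecture cannot be directly matched against them. On the pure-$O$-sequence side the interval property is itself delicate and open in general; proving it beyond small socle degree, or replacing it by a weaker statement still strong enough to capture every matroid $h$-vector, is precisely where a full proof of Stanley's conjecture currently breaks down.
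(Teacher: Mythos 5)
Your high-level framing matches the paper's stated philosophy (shift the work from constructing level monomial ideals to a conjecture about pure $O$-sequences, prove it in socle degree $\le 3$, deduce rank $3$), but the mechanism you propose has a genuine gap, and it is not the paper's mechanism. You claim that every rank $3$ matroid $h$-vector $(1,h_1,h_2,h_3)$ is differentiable and flawless, and that these numerical constraints place it inside a region of pure $O$-sequences swept out by the interval property between a complete-intersection profile and a compressed profile. The differentiability claim is false: matroid $h$-vectors can strictly decrease at the end, e.g.\ $h=\left(1,r,\binom{r}{2}+r-1,\binom{r}{2}\right)$, which actually occurs in rank $3$ (it is the exceptional join case treated explicitly in the paper's proof of Theorem~\ref{thrm:stanley-2dim}), and a decreasing sequence has a negative first difference, hence is not differentiable. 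Indeed the paper lists ``a matroid $h$-vector is differentiable for as long as it is nondecreasing'' as an open, possibly too bold, assumption (a) for future work --- it is nowhere a known constraint you can invoke. More fundamentally, no numerical characterization of rank $3$ matroid $h$-vectors is available even a posteriori, so a purely region-membership argument has nothing to match against; this is exactly why the paper instead runs an induction on a vertex $v$ via the decomposition $h_i(\Delta)=h_i(\Delta_{-v})+h_{i-1}(\link_\Delta(v))$, formulates Conjecture~\ref{ccc} about when the \emph{shifted sum} of two pure $O$-sequences is pure, and proves it for $e\le 3$ (Theorem~\ref{2}).

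Your use of the interval property is also too coarse. The ICP, as proved in \cite{BMMNZ} for socle degree $\le 3$ (Lemma~\ref{lemma:ICP3}), perturbs \emph{one} coordinate at a time between two pure sequences agreeing elsewhere; it cannot interpolate a multi-dimensional ``admissible region'' between the complete-intersection and compressed extremes, which differ in several entries. The paper's actual applications of ICP are correspondingly surgical: in Theorem~\ref{2} it compares $h_A=(1,r,a_0,b+c)$ with $h_{W''}=(1,r,a_1,b+c)$, varying only the degree-$2$ entry after constructing $W''$ explicitly, and in Lemma~\ref{bcbc} it varies only $h_3$ between $\binom{r}{2}+1$ (forced by the Brown--Colbourn inequalities, Lemma~\ref{bc}) and $\binom{r+2}{3}$. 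Finally, your plan omits the structural matroid work the induction requires --- choosing $v$ with $x_v$ dividing a degree $2$ generator so that Lemma~\ref{lemma:hvector-ineq-bad-case} yields the hypotheses of Conjecture~\ref{ccc}, Lemma~\ref{lemma:sssttt} to handle the case where every deletion is a cone (forcing a complete intersection in dimension $\le 2$), and the exceptional join case --- without which the socle-degree-$3$ result on pure $O$-sequences cannot be brought to bear on matroids at all.
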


This conjecture, as Proudfoot pointedly stated in \cite{Pr}, ``\ldots
has motivated much of the [recent] work on $h$-vectors of matroid
complexes''. Over thirty years later, Stanley's conjecture is still
wide open and mostly not understood, although a number of interesting
partial results have been obtained; see
\cite{Ch,Ch2,HS,Me,MNRV,Oh,Sch,Sto,Sto2,Sw,Sw2}. As of today, the typical
approach to Stanley's conjecture has been to, given the $h$-vector $h$
of a matroid, explicitly construct a pure monomial order ideal (or
equivalently, an artinian level monomial algebra; see \cite{BMMNZ}) with $h$-vector
$h$. Our goal, assisted by recent progress on pure $O$-sequences and
especially the Interval Property in socle degree 3 (see \cite{BMMNZ}), is to avoid the
above constructions and instead begin the study of Stanley's
conjecture under a new and more abstract perspective.

Our approach essentially consists of reducing ourselves to focusing,
as much as possible, on properties of pure $O$-sequences.  We formulate a new conjecture on pure $O$-sequences
implying Stanley's for matroids satisfying certain hypotheses, which, by induction, gives us the key to completely resolve Stanley's conjecture for
matroids of rank 3 (i.e.,\ dimension 2). We conclude the paper by outlining, using our approach, a first, if still tentative, plan of attack to the general case of Stanley's conjecture. Finally, we wish to point out that about five months after our submission, De Loera, Kemper and Klee \cite{delo} provided another proof of Stanley's conjecture in rank 3. Their argument for this result, although it uses the constructive approach and appears to be \emph{ad hoc}, is simpler than ours.


\section{Definitions and preliminary results}\label{sec:prel}

In this section, we introduce the notation and terminology used in the
paper, and state some auxiliary results. For the unexplained algebraic
terminology we refer the reader to \cite{BH,MS, St3}.

\subsection*{Simplicial and matroid complexes.}\label{sec:simpl-matr-compl}
Let $V =
\{v_1,\dotsc, v_n\}$ be a set of distinct elements. A collection,
$\Delta$, of subsets of $V$ is called a \emph{simplicial complex} if
for each $F\in\Delta$ and $G\subseteq F$, $G\in\Delta$.

Elements of the simplicial complex $\Delta$ are called \emph{faces} of
$\Delta$. Maximal faces (under inclusion) are called \emph{facets}. If
$F \in \Delta$ then the \emph{dimension of $F$} is $\dim F = |F| -
1$. The \emph{dimension of $\Delta$} is defined to be $\dim \Delta =
\max \{\dim F \mid F \in \Delta\}$. The complex $\Delta$ is said to be
\emph{pure} if all its facets  have the same
dimension.

If $\{v\}\in\Delta$, then we call $v$ a \emph{vertex} of $\Delta$ (we will
typically ignore the distinction between $\{v\}\in\Delta$ and $v\in
V$). Throughout the paper, $\Delta$ will denote a simplicial complex
with vertices $\{1, \dots, n\}$.

Let $d -1 = \dim \Delta$. The \emph{$f$-vector} of $\Delta$ is the
vector $f(\Delta) = (f_{-1},f_0, \dots, f_{d-1})$, where $f_i =
\abs{\{F\in\Delta \mid \dim F = i\}}$ is the number of $i$-dimensional
faces in $\Delta$.

Let $k$ be a field. We can associate, to a simplicial complex
$\Delta$, a squarefree monomial ideal in $S =k[x_1, \dotsc, x_n]$,
\[I_\Delta = \ideal{x_F = \prod_{i \in F} x_i \mid F \not\in \Delta } \subseteq S.\]
The ideal $I_\Delta$ is called the
\emph{Stanley-Reisner ideal} of $\Delta$, and the quotient algebra
$k[\Delta] = S/I_\Delta$ the \emph{Stanley-Reisner ring} of
$\Delta$.



If $W \subseteq V$ is a subset of the vertices then we define the
\emph{restriction} of $\Delta$ to $W$, denoted by
$\restrict{\Delta}{W}$, to be the complex whose faces are the faces of
$\Delta $ which are contained in $W$.

A simplicial complex $\Delta$ over the vertices $V$ is called a
\emph{matroid complex} if for every subset $W \subseteq V$,
$\Delta|_W$ is a pure simplicial complex (see,
e.g.,\ \cite{St3}). There are several equivalent definitions of a
matroid complex. The one we will be using most often is the following,
given by the \emph{circuit exchange property}: $\Delta$ is a matroid
complex
if and only if, for any two minimal generators $M$ and $N$ of
$I_\Delta$, their least common multiple divided by any variable in the
support of both $M$ and $N$ is in $I_\Delta$.

Although we mainly use the language of algebraic combinatorics or
commutative algebra in this paper, it is useful to recall here that,
for most algebraic definitions or properties concerning matroids,
there is also a corresponding standard formulation in matroid theory
 (see for instance \cite{Ox,Wh}). In particular, the faces of $\Delta$
are also called \emph{independent sets}, and the non-faces are the
\emph{dependent sets}.  A facet of $\Delta$ is  known in matroid
theory as a \emph{basis}, and a minimal non-face of the complex (or
\emph{missing face}, as in \cite{St2000}) is a \emph{circuit}, which
corresponds bijectively to a minimal monomial  generator of the ideal
$I_\Delta$.  The \emph{rank} of $\Delta$ is equal to $\dim\Delta+1$
 (i.e.,\ it is the cardinality of a basis).

\subsection*{Hilbert functions, \texorpdfstring{$h$}{h}-vectors and pure \texorpdfstring{$O$}{O}-sequences}\label{sec:hilbert-functions}

For a standard graded $k$-algebra $A = \bigoplus_{n \ge 0} A_n$, the
\emph{Hilbert function} of $A$ indicates the $k$-vector space
dimensions of the graded pieces of $A$; i.e., $H_A(i) = \dim_k
A_i$. The \emph{Hilbert series} of $A$ is the generating function of
its Hilbert function,  $\sum_{i =0}^\infty H_A(i)t^i$.

It is a well-known result of commutative algebra that for a
homogeneous ideal $I \subseteq S$, the Hilbert series of $S/I$ is a
rational function with numerator $\sum_{i=0}^e h_i t^i$; we call the
sequence $h_{S/I}=(h_0,\dotsc,h_e)$ the \emph{$h$-vector} of
$S/I$. The index $e$ is the \emph{socle degree} of $S/I$.

Define the \emph{$h$-vector of a simplicial complex} $\Delta $ to be
the $h$-vector of its Stanley-Reisner ring $k[\Delta]$. Of course, $h$-vectors are also fundamental objects in algebraic combinatorics, and are studied in a number of areas, including in the context of shellings (see e.g.\ \cite{St3}). Assuming that
$\dim \Delta = d-1$, the $h$-vector $h(\Delta) = (h_0, \dots, h_d)$ of
$\Delta$ can also be derived (or, in fact, defined) from its
$f$-vector $f(\Delta) = (f_{-1}, \dots, f_{d-1})$, via the relation
\[\sum_{i = 0}^d f_{i-1}t^i(1-t)^{d-i} = \sum_{i=0}^d
h_i t^i.\]
In particular, for any $j = 0, \dots, d$, we
have
\begin{equation}\label{eq.fhvectors}
\begin{split}
f_{j-1} & =\sum_{i=0}^j \binom{d-i}{j-i} h_i;\\
h_j & = \sum_{i=0}^j (-1)^{j-i} \binom{d-i}{j-i} f_{i-1}.
\end{split}
\end{equation}
Unlike the algebraic case, we allow the $h$-vector of a simplicial
complex to end with some trailing 0's.

A standard graded algebra $A$ is \emph{artinian} if its Hilbert function $H_{A}$ is eventually zero (see, e.g., \cite{BH,St3} for a number of equivalent definitions). Notice that, in the artinian case, the Hilbert function, $H_{A}$,  of $A$ can  be naturally identified with its $h$-vector, $h_A$. A sequence that occurs as the $h$-vector of some artinian standard graded algebra is called an \emph{$O$-sequence} (see \cite{BH,macau} for Macaulay's characterization of the $O$-sequences). We will not need to go further into this here, but it is helpful to point out that, from an algebraic point of view, the context of artinian algebras  is exactly the one we will implicitly be working in in the main portion of the paper.

Let us also recall that the $h$-vector $h$ of a matroid $\Delta $
can also be expressed in terms of the \emph{Tutte polynomial},
$T(x,y)$, of $\Delta $; precisely, we have $T(x,1)=h_0x^d + h_1 x^{d-1}+\dots
+h_d$ (see for instance \cite{bjorner}). Furthermore, the $h$-vector $h$ of
any matroid is an $O$-sequence, as can be shown using standard tools from commutative algebra. (In fact, all matroid $h$-vectors are {level} $h$-vectors; see e.g.\ \cite{St3}.) Notice, in particular, that $h$ is nonnegative, a fact not obvious \emph{a priori} and that was first proved combinatorially.


An algebra $S/I$ is a \emph{complete intersection} if its codimension
is equal to the number of minimal generators of $I$. In the case of
matroid (or, more generally, arbitrary monomial) ideals $I_{\Delta }$,
it is easy to see that $I_{\Delta }$ is a complete intersection if and
only if the supports of its minimal generators are pairwise
disjoint. Complete intersection matroids are those that in matroid theory
 are the \emph{connected sums} of their circuits. Equivalently,
they are the join of  boundaries of simplices, or in the
language of commutative algebra, they coincide with \emph{Gorenstein}
matroids (see \cite{Sto}).

A finite, nonempty set $X$ of (monic) monomials in the indeterminates
$y_1,y_2,\ldots,y_r$ is called a \emph{(monomial) order ideal} if
whenever $M\in X$ and $N$ is a monomial dividing $M$, $N\in X$. Notice
that $X$ is a ranked poset with respect to the order given by
divisibility. The \emph{$h$-vector} $h=(h_0=1,h_1,\ldots,h_e)$ of $X$ is its rank vector; in other words, $h_i$ is the number of monomials of $X$ having degree $i$. We say that $X$ is \emph{pure} if all of its maximal monomials
have the same degree.  The sequences that occur as  $h$-vectors of
 order ideals are, in fact, precisely the $O$-sequences defined
above.  A \emph{pure $O$-sequence} is the $h$-vector of a pure order
ideal.

\subsection*{Deletions, links and cones}\label{sec:delet-links-cones}
Let $v \in V$. The \emph{deletion} of $v$ from a simplicial complex
$\Delta$, denoted by $\Delta_{-v} $, is defined to be the restriction
$\Delta|_{V -\{v\}}$. The \emph{link} of $v$ in $\Delta$, denoted by
$\link_\Delta(v)$, is the simplicial complex $\{ G \in \Delta \mid v
\not\in G, G \cup \{v\} \in \Delta\}$. For simplicity in dealing with
Stanley-Reisner ideals, we will also consider links and deletions to
be complexes defined over $V$. In particular, not all elements of the
vertex set will be faces of a link or a deletion complex.  Link and
deletion are identical to the \emph{contraction} and \emph{deletion}
constructions from matroid theory.


Let $x \not\in V$ be a new vertex. The \emph{cone} over the simplicial
complex $\Delta$ with \emph{apex} $x$ is the simplicial
complex $\{F \cup \{x\} \mid F \in \Delta\} \bigcup \{F \in
\Delta\}$. Notice that a complex $\Gamma$ is a cone with apex $x$ if
and only if $x$ is contained in all the facets of $\Gamma$.  A matroid
is a cone if and only if it has a \emph{coloop}, which corresponds to
the apex defined above.

The following  useful facts are standard (see for instance \cite{Sto}).

\begin{remark}\label{lem.cone}
Let $\Delta$ be a matroid complex of dimension $d-1$, and let $v\in
\Delta $. Then:
\begin{enumerate}
\item $\Delta_{-v} $ is a matroid complex, and its Stanley-Reisner
  ideal is $I_{\Delta_{-v}}=I_{\Delta } +\ideal{x_v}$.
\item $\link_\Delta(v)$ is a matroid complex, and its Stanley-Reisner
  ideal is $I_{\link_\Delta(v)}=(I_{\Delta }: x_v) +\ideal{x_v}$.
\item (The nonzero portion of) $h(\Delta)$ coincides with (the nonzero portion of) the $h$-vector of any cone over
  $\Delta$.
\item $h(\Delta) $ has socle degree $d$ (i.e., $h_d\neq 0$) if and only if $\Delta $ is
  not a cone.
\end{enumerate}
\end{remark}

Finally, we recall the well-known Brown-Colbourn inequalities on
matroid $h$-vectors:

\begin{lemma}[\cite{BC}]\label{bc}
Let $h=(h_0,h_1,\dotsc,h_{d})$ be a matroid $h$-vector. Then, for any
index $j$, $0\leq j\leq d$, and for any real number $\alpha \geq 1$,
we have
\[(-1)^j\sum_{i=0}^j(-\alpha )^i h_i\geq 0\]
 (where the inequality is strict for $\alpha \neq 1$).
\end{lemma}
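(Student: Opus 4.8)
The plan is to prove the Brown–Colbourn inequalities by exploiting the fact that the alternating sum $(-1)^j\sum_{i=0}^j(-\alpha)^i h_i$ is, up to a harmless factor, an evaluation of a truncated Tutte polynomial. Recall from the excerpt that $T(x,1)=h_0x^d+h_1x^{d-1}+\dots+h_d$, so the $h$-vector is encoded in a one-variable specialization of the Tutte polynomial. My first step would be to rewrite the target expression as a value of the \emph{characteristic-type} polynomial obtained from $T$. Setting $x=-\alpha$ (or a suitable reciprocal substitution) turns the signed partial sums of the $h_i$ into partial evaluations, and the claimed nonnegativity becomes a statement about the signs of these evaluations.

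The heart of the argument is combinatorial, via the \emph{deletion-contraction} recursion that the Tutte polynomial satisfies: for any non-loop, non-coloop element $e$ of the matroid $\Delta$, one has $T_\Delta(x,y)=T_{\Delta_{-e}}(x,y)+T_{\link_\Delta(e)}(x,y)$, with the base cases given by loops (contributing a factor $y$) and coloops (contributing a factor $x$). Using Remark~\ref{lem.cone}, deletion and link of $\Delta$ are again matroid complexes, so the inequalities for the smaller matroids are available by induction on the number of elements (equivalently, on $\dim\Delta$ together with the vertex count). Thus I would induct: verify the inequality for the base cases of a single loop and a single coloop by direct computation, then show that the quantity $(-1)^j\sum_{i=0}^j(-\alpha)^i h_i$ is additive (or suitably monotone) under the deletion-contraction split so that nonnegativity is inherited from the two smaller matroids. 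One must track carefully how the rank $d$ shifts between $\Delta$, $\Delta_{-e}$, and $\link_\Delta(e)$, since the index $j$ and the exponent $d$ appearing in $T(x,1)$ both depend on the rank.

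The strictness assertion for $\alpha\neq 1$ needs a separate, slightly finer analysis: at $\alpha=1$ the alternating sum telescopes to a single $h_i$ and can vanish, whereas for $\alpha>1$ the geometric weighting $(-\alpha)^i$ forces a strict inequality. I would obtain strictness by checking that at least one base case in the inductive tree contributes a strictly positive term for $\alpha\neq 1$ and that the additivity preserves strict positivity, using the fact that every matroid of positive rank admits at least one coloop or, after deletion-contraction, reduces to a configuration where a strict contribution appears.

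The main obstacle I anticipate is the bookkeeping in the inductive step: because deletion and contraction change the rank and hence reindex the $h$-vector, expressing the single scalar $(-1)^j\sum_{i=0}^j(-\alpha)^i h_i$ uniformly across $\Delta$, $\Delta_{-e}$, and $\link_\Delta(e)$ is delicate. One must confirm that the partial-sum functional is exactly the coefficient extraction that commutes with the deletion-contraction recursion, rather than merely being controlled by it. I expect that phrasing the whole argument directly in terms of evaluations $T(-\alpha,1)$ of truncations of the Tutte polynomial—so that the recursion operates on polynomials rather than on reindexed scalar sums—will be the cleanest way to circumvent this difficulty, after which the sign analysis and the strictness claim follow from standard properties of $T$.
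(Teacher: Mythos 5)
The paper offers no proof of this lemma---it is quoted directly from Brown and Colbourn \cite{BC}---so your sketch must stand on its own, and as written it has a genuine gap at exactly the point you treat as routine: the base cases and the inheritance step. The inequality is \emph{false} for matroids with coloops. A single coloop has $h$-vector $(1,0)$, and for $j=1$ the quantity is $(-1)\bigl(h_0-\alpha h_1\bigr)=-1<0$; more generally, for a cone the partial sums $(-1)^j\sum_{i\le j}(-\alpha)^i h_i$ flip sign with each $j$ beyond the socle degree. The lemma is thus implicitly a statement about coloop-free matroids, i.e.\ $h_d\neq 0$ (compare Remark~\ref{lem.cone}(4), and note the paper invokes the lemma in Lemma~\ref{bcbc} only after reducing to $h_3>0$). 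This is not peripheral, because deletion destroys coloop-freeness: for the rank-$d$ circuit $U_{d,d+1}$ (the boundary of a $d$-simplex, with $h=(1,1,\dotsc,1)$), deleting \emph{any} element yields the free matroid with $h=(1,0,\dotsc,0)$, whose partial sums are negative for every odd $j$. So your inductive step---``nonnegativity is inherited from the two smaller matroids'' by additivity, with a single coloop verified as a positive base case---fails on the deletion branch already for the simplest coloop-free matroids. The actual content of the Brown--Colbourn argument is to show that the contraction term compensates when deletion creates coloops (this is precisely where $\alpha\ge 1$ is needed), which requires a quantitatively strengthened induction hypothesis or a case analysis on series classes; your plan supplies neither, and the same defect infects your strictness argument, since ``every matroid of positive rank admits at least one coloop'' is false (any circuit), and coloops are the obstruction here, not the source of positivity.

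By contrast, the bookkeeping you single out as the main obstacle is the easy part, and no detour through truncated Tutte evaluations is needed. Setting $P_j^{\Delta}(\alpha)=(-1)^j\sum_{i=0}^j(-\alpha)^i h_i(\Delta)$ and using $h_i(\Delta)=h_i(\Delta_{-v})+h_{i-1}(\link_\Delta(v))$ for $v$ neither a loop nor a coloop (so the ranks are $d$ and $d-1$), one gets immediately
$P_j^{\Delta}(\alpha)=P_j^{\Delta_{-v}}(\alpha)+\alpha\,P_{j-1}^{\link_\Delta(v)}(\alpha)$,
a perfectly clean shifted deletion--contraction recursion on the scalar partial sums; loops are harmless since they do not change the $h$-vector. (Also note that matching your normalization $T(x,1)=\sum_i h_ix^{d-i}$ to these sums requires the reciprocal substitution $x=-1/\alpha$ scaled by $\alpha^d$, and even then only recovers the case $j=d$.) Two further slips: at $\alpha=1$ the sum is the full alternating partial sum $\sum_{i\le j}(-1)^{j-i}h_i$, not a single telescoped $h_i$---it can indeed vanish, e.g.\ $h=(1,1)$ with $j=1$---and your Tutte recursion, while correct, buys you nothing beyond the $h$-vector recursion above. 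In short: the reindexing is fine, but the coloop analysis, which is the heart of \cite{BC}, is missing.
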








\section{The conjecture on pure \texorpdfstring{$O$}{O}-sequences}

In this section, we propose a conjecture on pure $O$-sequences and
prove this conjecture for small socle degrees. This result will be
crucial to settle Stanley's conjecture for matroids of rank 3 in the
next section.
Our argument is by induction on the link and deletion of $\Delta$.
Since they have fewer vertices, $h(\Delta_{-v})$ and
$h(\link_\Delta(v))$ are both pure $O$-sequences by induction.  The
$h$-vector of $\Delta$ can be computed as the shifted sum of
$h(\Delta_{-v})$ and $h(\link_\Delta(v))$, namely,
$$h_i(\Delta)=h_i(\Delta_{-v})+h_{i-1}(\link_\Delta(v)),$$
for all $i$. We conjecture conditions (which are satisfied by rank 3 matroids) that
imply that the shifted sum of two pure $O$-sequences is a pure
$O$-sequence.

Given a vector $H=(1,H_1,H_2,\dots ,H_t)$ of natural numbers, define $\Delta
H=(1,H_1-1,H_2-H_1,\dots ,H_t-H_{t-1})$ to be its \emph{first difference}.  $H$ is
\emph{differentiable} if $\Delta H$ is an $O$-sequence. (It is
easy to see that a differentiable vector is itself an $O$-sequence.)

Our conjecture is stated as follows.

\begin{conjecture}\label{ccc}
Let $h=(1,h_1,\dotsc,h_e)$ and $h'=(1,h_1',\dotsc,h_{e-1}')$ be two pure
$O$-sequences, and suppose that $(\Delta h')_i\leq (\Delta h)_i$ for
all $i\leq \lceil e/2 \rceil $, and $h_i'\leq h_i$ for all $i\leq
e-1$. Then the shifted sum of $h$ and
$h'$,
\[h''=(1,h_1+1,h_2+h_1',\dotsc,h_e+h_{e-1}'),\]
is also a pure $O$-sequence.
\end{conjecture}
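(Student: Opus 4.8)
The plan is to reduce the statement to an explicit construction of a pure order ideal. Suppose $X$ and $X'$ are pure order ideals realizing $h$ and $h'$ respectively, with the variables of $X'$ identified with (a subset of) those of $X$. Introduce a fresh variable $z$ and consider
\[
X''=X\cup z\cdot X',
\]
where $z\cdot X'=\{zM : M\in X'\}$. A direct check shows that, whenever $X''$ is an order ideal, it is pure of socle degree $e$ with rank vector exactly $h''$: the degree-$i$ monomials of $X''$ are those of $X$ together with the degree-$(i-1)$ monomials of $X'$ multiplied by $z$, giving $h_i+h'_{i-1}$, while in degree $1$ one gains the single monomial $z$, which accounts for the entry $h_1+1$. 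Purity holds because the top monomials of $X$ have degree $e$, those of $z\cdot X'$ have degree $(e-1)+1=e$, and each remains maximal in $X''$: no monomial of degree $e$ in $X$ lies in $X'$, and no monomial divisible by $z$ can be enlarged within $X''$ beyond $z\cdot X'$.

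The entire difficulty is then concentrated in one point. The set $X''$ is an order ideal precisely when $X'\subseteq X$ as sets of monomials, since any divisor of $zM$ not divisible by $z$ is a divisor of $M\in X'$ and must already lie in $X$. Thus the conjecture is equivalent to the existence statement: under the two hypotheses, $h$ and $h'$ admit \emph{nested} realizations, i.e.\ pure order ideals $X\supseteq X'$ with rank vectors $h$ and $h'$. The pointwise inequalities $h'_i\le h_i$ for $i\le e-1$ are clearly necessary for such a nesting, degree by degree; the real content of the conjecture is that the first-difference condition $(\Delta h')_i\le(\Delta h)_i$ for $i\le\lceil e/2\rceil$ is enough to actually fit a realization of $h'$ inside one of $h$.

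To produce nested realizations I would try to build $X\supseteq X'$ simultaneously, degree by degree, selecting the monomials of $X'$ among those already placed in $X$ while preserving purity of both. The two hypotheses play complementary roles: below the middle the bound $(\Delta h')_i\le(\Delta h)_i$ controls how many new monomials each order ideal may acquire, guaranteeing room to keep $X'$ within $X$, whereas above the middle only the pointwise bound $h'_i\le h_i$ is available and must suffice, which is plausible since the upper half of a pure $O$-sequence is largely governed by its lower half. The main obstacle is that, unlike ordinary $O$-sequences, pure $O$-sequences admit no Macaulay-type lex normal form, so there is no canonical realization from which the nesting could simply be read off; existence must be argued directly, and this is exactly the point at which the conjecture resists a proof in full generality.

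For this reason I would first settle the conjecture in small socle degrees, where pure $O$-sequences are characterized explicitly. For $e\le 2$ the pure $O$-sequences of a given socle degree are cut out by simple numerical inequalities—for instance, in socle degree $2$, $(1,a,b)$ is pure if and only if $\lceil a/2\rceil\le b\le\binom{a+1}{2}$—and the conclusion for $h''$ follows by comparing these bounds against the hypotheses. For $e=3$ the characterization is finer, and here I would invoke the Interval Property for pure $O$-sequences in socle degree $3$ from \cite{BMMNZ}: after exhibiting pure realizations at the extreme admissible values of the relevant entry of $h''$, the Interval Property fills in every intermediate value, yielding that $h''$ is a pure $O$-sequence.
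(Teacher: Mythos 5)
Your construction $X''=X\cup zX'$ is sound, and your verification that it is a pure order ideal with $h$-vector $h''$ whenever $X'\subseteq X$ is correct; it is in fact the same device as the paper's Lemma~\ref{lemma:shifted-sum}. But there are two problems with the way you use it. First, the claim that Conjecture~\ref{ccc} is \emph{equivalent} to the existence of nested pure realizations is an overstatement: you only prove sufficiency, since $h''$ could be realized by a pure order ideal not of the form $X\cup zX'$. Second, and decisively, the existence of nested realizations is precisely what you leave unproven --- even in socle degree $3$, which is the only case the paper actually settles (Theorem~\ref{2}). The paper never proves nesting, and its proof of the $e=3$ case suggests that exact nesting may be unattainable: in the hard case $b+c<a+r'$ it uses your union construction but applied only to the top-degree generators (the pure order ideal $W''$ generated by $M_1,\dots,M_b,y_rN_1,\dots,y_rN_c$), and the resulting $h$-vector $(1,r,a_1,b+c)$ \emph{overshoots} in degree $2$, with $a_1\ge a+r'$. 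The exact value $a+r'$ is then reached not by refining the construction but by interpolating with the Interval Property (Lemma~\ref{lemma:ICP3}) between this upper endpoint and a lower pure endpoint: $(1,r,r,b+c)$, pure by Lemma~\ref{rrr}, when $b+c\le r$, or $(1,r,a_0,b+c)$ with $a_0$ the least degree-$2$ entry making the sequence differentiable, when $r<b+c<a+r'$. Your own (correct) observation that pure $O$-sequences admit no Macaulay-type lex normal form is exactly why the nesting approach stalls; the paper circumvents this obstacle rather than overcoming it.

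Your sketch for small socle degrees is directionally consistent with the paper but omits the substantive content. For $e\le 2$ your numerical characterization argument matches the paper's. For $e=3$, however, ``exhibit extremes and let the ICP fill in'' hides the essential case split on $b+c\ge a+r'$ versus $b+c<a+r'$: in the first case the paper does not use the ICP at all, but differentiability --- Proposition~\ref{mac} (the shifted sum of differentiable $O$-sequences of the relevant shapes is differentiable, which rests on Lemma~\ref{lemma:shifted-sum}, i.e.\ your union construction applied to \emph{lex} realizations of ordinary $O$-sequences, where Macaulay's theorem does hand you the nesting $X'\subseteq X$ for free), combined with the facts from \cite{BMMNZ} that the maximal admissible last entry is the same for pure and for differentiable sequences and that finite differentiable $O$-sequences are pure. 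Note also that the hypothesis $(\Delta h')_i\le(\Delta h)_i$ for $i\le\lceil e/2\rceil$ never actually enters your argument except as a heuristic about ``room''; in the paper it is exactly what licenses the application of Lemma~\ref{lemma:shifted-sum} to the first differences. As it stands, your proposal establishes the easy sufficiency of nesting and defers all of the difficulty, so it does not constitute a proof of Theorem~\ref{2}, let alone of Conjecture~\ref{ccc} in general.
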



\begin{remark}\label{rmrk:it-proved-citebmmnz}
It is proved in \cite[Theorem~3.3]{BMMNZ} that all socle degree 3
nondecreasing pure $O$-sequences are differentiable. (This fact is
false for higher socle degrees; see \cite[Proposition 3.5]{BMMNZ}.)
This justifies the assumption, in Conjecture~\ref{ccc}, of the
inequality between the initial parts of the first differences of $h$
and $h'$. For example, $h=(1,6,6,6)$ and $h'=(1,3,6)$ are two pure
$O$-sequences satisfying the condition $h_i'\leq h_i$ for all $i\leq
e-1$, but $(\Delta h')_2 = 3 \not\le (\Delta h)_2 = 0$; and in this
case, their shifted sum, $h''=(1,7,9,12)$, is not differentiable,
hence not a pure $O$-sequence.
\end{remark}

We first need some lemmas. The first is a theorem of Hibi and Hausel
on pure $O$-sequences, a result which is analogous to Swartz's (algebraic) $g$-theorem
for matroids \cite{Sw}.

\begin{lemma}[Hausel, Hibi]\label{hausel}
 Let $h=(1,h_1,\dotsc,h_e)$ be a pure $O$-sequence. Then $h$ is
 \emph{flawless} (that is, $h_i\leq h_{e-i}$ for all $i\leq e/2$), and
 its ``first half'', $(1,h_1,\dotsc,h_{\lceil e/2 \rceil})$, is a
 differentiable $O$-sequence.
\end{lemma}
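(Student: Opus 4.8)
The plan is to pass from the combinatorial statement to the language of Macaulay's inverse systems, where both assertions become properties of a single artinian algebra. Realize the pure order ideal whose rank vector is $h$ as a set $X$ of monomials in $y_1,\dots,y_r$, all of whose maximal elements have degree $e$, and set $M=\operatorname{span}_k(X)\subseteq E=k[y_1,\dots,y_r]$. Since $X$ is an order ideal, $M$ is a module over $R=k[x_1,\dots,x_r]$ acting by contraction, and purity is exactly the statement that $M$ is generated in the single top degree $e$. Putting $A=R/\Ann(M)$, one has $H_A(i)=\dim_k M_i=h_i$, so $A$ is an artinian \emph{level} algebra of socle degree $e$ whose Hilbert function is $h$. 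I would read both parts of the lemma off from $A$.

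For flawlessness I would produce, for each $i\le e/2$, an injection of the degree-$i$ piece into the degree-$(e-i)$ piece. The combinatorial route builds this injection directly on monomials: form the bipartite graph on $X_i$ and $X_{e-i}$ in which $N\in X_i$ is joined to every multiple of $N$ lying in $X_{e-i}$, and verify Hall's marriage condition using purity — every $N\in X_i$ divides some top monomial of degree $e$, which in turn has many divisors of degree $e-i$ inside $X$ — so that a matching saturating $X_i$ exists and $h_i=|X_i|\le|X_{e-i}|=h_{e-i}$. The delicate point is checking the marriage condition for arbitrary subsets of $X_i$; alternatively one can argue algebraically, exploiting that the level property makes the canonical module $\omega_A$ generated in its extreme degree and comparing dimensions through a general contraction map.

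For the differentiable first half, the strategy is to show that, over a field of characteristic $0$, multiplication by a general linear form $\ell\in A_1$ is injective on $A_{i-1}\to A_i$ for all $i\le\lceil e/2\rceil$. Granting this, the first difference satisfies $(\Delta h)_i=\dim_k A_i-\dim_k A_{i-1}=\dim_k(A/\ell A)_i$ throughout the first half, and since $A/\ell A$ is a standard graded $k$-algebra its Hilbert function is an $O$-sequence; restricting to degrees $\le\lceil e/2\rceil$ then shows that $\Delta(1,h_1,\dots,h_{\lceil e/2\rceil})$ is an $O$-sequence, i.e.\ the first half is differentiable. I expect this injectivity — a weak Lefschetz property valid only up to the middle degree — to be the main obstacle: it has no purely combinatorial proof and genuinely fails past the middle, consistent with the fact that differentiability can break down for the full sequence in higher socle degrees (see Remark~\ref{rmrk:it-proved-citebmmnz}). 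The argument must therefore import a hard-Lefschetz, Hodge-theoretic input, which is precisely the core of Hausel's contribution.
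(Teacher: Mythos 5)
The paper offers no argument of its own here: its ``proof'' of Lemma~\ref{hausel} is a pointer to \cite[Theorem 6.3]{Ha} for differentiability and to \cite{Hi} for flawlessness. Your framework is the right one, and for the differentiability half your reconstruction matches the cited source. The identification $H_A(i)=h_i$ is correct (the standard monomials of $R/\Ann(M)$ in degree $i$ are exactly the degree-$i$ monomials of $X$, and purity of $X$ is precisely levelness of $A$ with socle degree $e$); the reduction to characteristic $0$ is harmless because $h$ is field-independent; and injectivity of multiplication by a general $\ell\in A_1$ through degree $\lceil e/2\rceil$ does give that $\Delta h$ agrees in that range with the $O$-sequence $H_{A/\ell A}$, initial segments of $O$-sequences being $O$-sequences by Macaulay's growth condition. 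Since you explicitly import that injectivity from Hausel's hard-Lefschetz argument rather than prove it, this half of your proposal amounts to a correct reduction plus the same citation the paper makes, which is fine.

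The flawlessness half, however, has a genuine gap. ``Verify Hall's marriage condition using purity'' is never carried out, and the justification you offer --- each $N\in X_i$ divides some top monomial, which has many degree-$(e-i)$ divisors in $X$ --- only bounds the neighborhood of a \emph{single} vertex. Hall's condition must be checked for every subset $S\subseteq X_i$, and already for $S=X_i$ it \emph{is} the inequality $h_i\le h_{e-i}$ you are trying to prove; indeed the existence of a divisibility matching saturating $X_i$ is a strictly stronger assertion than Hibi's inequality, so as written the argument is circular. The precise point where purity fails to help: the divisor lattice of one top monomial is a product of chains and has the normalized matching property, but $X$ is a union of such lattices over several top monomials, and the level-to-level matchings for the individual pieces need not be compatible --- controlling that interaction is the entire content of Hibi's theorem. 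Your fallback, using only levelness and the fact that $\omega_A$ is generated in a single degree, cannot succeed as stated: flawlessness is \emph{false} for arbitrary level algebras (there exist nonunimodal level $h$-vectors of socle degree $3$, which violate $h_1\le h_2$), so any correct proof must exploit the monomial structure, as Hibi's argument does. To complete the write-up you should either reproduce Hibi's proof or cite \cite{Hi}, exactly as the paper does --- and note the paper's remark that Hibi's part alone suffices for its later applications.
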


\begin{proof}
See \cite[Theorem 6.3]{Ha}. Differentiability is due to Hausel, who,
in fact, proved an (algebraic) $g$-theorem for pure $O$-sequences in characteristic
zero. That any pure $O$-sequence is flawless, and (consequently)
nondecreasing throughout its first half, was first shown by Hibi
\cite{Hi}. (The part of the result due to Hibi will actually be enough
for our purposes here.)
\end{proof}

The following conjecture is referred to as the Interval Conjecture for
Pure $O$-sequences (ICP), and was recently stated by the last author
in collaboration with Boij, Migliore, Mir\`o-Roig and Nagel
\cite{BMMNZ} (see \cite{Za2} for the original formulation of the
Interval Conjectures in the context of arbitrary level and Gorenstein
algebras, where it is still wide open). In \cite{BMMNZ}, the ICP was  proved for socle degrees
at most 3, which will be a crucial tool in our proof. We should also point out that, however,  while the ICP remains open in most instances --- for example, in three variables ---   it has recently been {disproved} in the four variable case by A. Constantinescu and M. Varbaro \cite{CoVa}.

\begin{conjecture}[\protect{\cite[Conjecture~4.1]{BMMNZ}}]\label{conj:ICP}
Suppose that, for some positive integer $\alpha $, both
$(1,h_1,\dotsc,h_i,\dotsc,h_e)$ and $(1,h_1,\dotsc,h_i+\alpha ,\dotsc,h_e)$ are
pure $O$-sequences. Then $(1,h_1,\dotsc,h_i+\beta ,\dotsc,h_e)$ is also a
pure $O$-sequence for each integer $\beta =1,2,\dotsc, \alpha -1$.
\end{conjecture}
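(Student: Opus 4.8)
The plan is to recast the statement in the language of pure monomial order ideals, equivalently (see \cite{BMMNZ}) that of artinian level monomial algebras and their Macaulay inverse systems. Fix pure order ideals $X$ and $X'$ in $k[y_1,\dotsc,y_r]$ whose rank vectors are the two given sequences. Then every maximal monomial of $X$ and of $X'$ has degree $e$; the degree-$j$ layers of $X$ and of $X'$ have equal cardinality for every $j\neq i$; and the degree-$i$ layer of $X'$ contains exactly $\alpha$ more monomials than that of $X$. A crucial point is that nothing forces $X\subseteq X'$: only the layer sizes coincide away from degree $i$, not the order ideals themselves. The goal is, for each $\beta$ with $1\leq\beta\leq\alpha-1$, to construct a pure order ideal $X_\beta$ whose degree-$i$ layer has size $h_i+\beta$ and all of whose other layers have the same sizes as those of $X$.

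The first reduction I would make is to a single local move, which I will call a \emph{descent}: given any pure order ideal with degree-$i$ layer of size $c>h_i$ and all other layers of the prescribed sizes, produce another pure order ideal with the same data except that its degree-$i$ layer has size $c-1$. If such a descent can always be performed, then starting from $X'$ (size $h_i+\alpha$) and iterating, one obtains realizations of every intermediate size down to $h_i+1$, since $X$ itself certifies that $h_i$ is a feasible lower value. Thus the entire content of the conjecture concentrates in this one structure-preserving deletion.

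The obstruction to a naive descent is exactly the source of the difficulty, and pinning it down is the heart of the matter. Because the order ideal is pure with top degree $e>i$, no degree-$i$ monomial $m$ is maximal: each such $m$ divides some monomial of degree $e$. The order-ideal property forces every multiple of a retained monomial to be retained, and contrapositively every multiple of a deleted monomial to be deleted; hence removing a single degree-$i$ monomial drags with it a nonempty set of higher-degree monomials, altering the layers in degrees $>i$ and typically destroying purity by leaving a maximal monomial of degree below $e$. The remedy I would pursue is a \emph{compensated exchange}: to free up a degree-$i$ monomial, one deletes a top monomial, then restores the degree-$e$ count by adjoining a different top monomial and rebuilds the damaged intermediate layers so that the net change is confined to degree $i$. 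For socle degree $e\leq 3$ this redistribution is forced by the available structure theory of pure $O$-sequences --- in particular by the differentiability of nondecreasing socle-degree-$3$ pure $O$-sequences recorded in Remark~\ref{rmrk:it-proved-citebmmnz} --- and this is what makes the proof go through in \cite{BMMNZ}.

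The main obstacle is precisely the existence of such a degree-preserving exchange for general $r$ and $e$. Executing it requires simultaneous control of the down-sets of all the top monomials in every degree at once, so that one coordinate of the Hilbert function moves while all others are held fixed and the socle stays concentrated in degree $e$; there is no structural reason that two pure order ideals sharing all but one layer size can be reconciled by such a move. A complete proof would therefore have to supply genuinely new input at this step --- either a classification of pure $O$-sequences fine enough to force the redistribution, or a connectivity result for the poset of pure order ideals with prescribed off-diagonal layers, realized perhaps as a deformation through monomial level algebras. This is also exactly the junction at which the intermediate value can fail to exist once $r\geq4$, as the counterexample of Constantinescu and Varbaro \cite{CoVa} shows, so any successful argument must isolate a hypothesis that rules such behavior out.
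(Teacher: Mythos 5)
You have not proved anything here, and no comparison with ``the paper's proof'' is possible because the paper contains none: the statement you are addressing is Conjecture~\ref{conj:ICP}, which the paper explicitly records as an open conjecture of \cite{BMMNZ}, uses only through the cited special case $e\leq 3$ (Lemma~\ref{lemma:ICP3}, quoted from \cite[Theorem~4.3]{BMMNZ} without proof), and, crucially, notes has been \emph{disproved} in four variables by Constantinescu and Varbaro \cite{CoVa}. Your proposal is an honest strategy sketch, but the entire mathematical content is deferred to the ``compensated exchange'' step --- deleting a degree-$i$ monomial while repairing the top layer and all intermediate layers so that only the degree-$i$ count changes and purity in degree $e$ is preserved --- and you concede in so many words that this step requires ``genuinely new input.'' That step is not a technical lemma awaiting routine verification; it \emph{is} the conjecture, so the reduction to iterated descents from the order ideal realizing $h_i+\alpha$ is essentially a restatement, not progress. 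Moreover, since you yourself cite the \cite{CoVa} counterexample, you know the general statement is false, so the descent cannot always be performed: any honest writeup must either restrict to the regime where the statement is a theorem (socle degree $e\leq 3$) or identify the hypothesis that excludes the four-variable pathology, and your sketch does neither.

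Two smaller inaccuracies are worth flagging. First, your assertion that in socle degree $3$ the redistribution ``is forced by'' the differentiability of nondecreasing pure $O$-sequences (Remark~\ref{rmrk:it-proved-citebmmnz}) and that ``this is what makes the proof go through in \cite{BMMNZ}'' is an unsupported attribution: neither this paper nor your argument establishes that the $e\leq 3$ proof proceeds by your exchange mechanism, and you should not present a guess about someone else's proof as the mechanism of your own. Second, your framing that ``$X$ itself certifies that $h_i$ is a feasible lower value'' plays no role in the descent as you set it up --- the descent move, as stated, must work for every pure order ideal with degree-$i$ layer of size $c>h_i$ and prescribed other layers, and the mere existence of $X$ gives no leverage on an unrelated order ideal $X'$, precisely because, as you correctly observe, nothing forces any containment or common structure between them. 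In short: the localization of the difficulty is accurate and well articulated, but as a proof attempt this has a gap coextensive with the statement itself.
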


\begin{lemma}[\protect{\cite[Theorem 4.3]{BMMNZ}}]\label{lemma:ICP3}
The ICP holds for $e\leq 3$.
\end{lemma}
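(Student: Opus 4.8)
The plan is to dispatch the three socle degrees $e=1,2,3$ separately, the content being concentrated in $e=3$. Throughout I view a pure $O$-sequence of socle degree $e$ as the rank vector of a pure order ideal $X\subseteq k[y_1,\dots,y_r]$ all of whose maximal monomials have degree $e$; thus $X$ is the set of all divisors of a finite set $G$ of degree-$e$ monomials, and $h_i$ counts the distinct degree-$i$ divisors of the monomials in $G$ (so $h_e=|G|$ and $h_1$ is the number of variables occurring in $G$). The key conceptual reduction is that the ICP for a fixed socle degree $e$ is equivalent to the statement that the set $P_e\subseteq\Z^{e+1}$ of pure $O$-sequences, with $h_0=1$ fixed, is \emph{coordinate-convex}: for every coordinate $i$ and every fixed choice of the remaining coordinates, the admissible values of $h_i$ form a set of consecutive integers. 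Indeed, the hypothesis of Conjecture~\ref{conj:ICP} says two points of $P_e$ differ in a single coordinate by $\alpha$, and the conclusion asks that all integers strictly between lie in $P_e$ as well; this is exactly the absence of gaps along each axis-parallel line. The cases $e=1$ and $e=2$ are then immediate: $(1,h_1)$ is always a pure $O$-sequence, and for $e=2$ one checks directly that $(1,h_1,h_2)\in P_2$ precisely when $1\leq h_2\leq\binom{h_1+1}{2}$ and $h_1\leq 2h_2$, a system whose coordinate slices are visibly intervals.

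For $e=3$ I would first pin down the numerical characterization of $P_3$ and then verify that it is coordinate-convex. The necessary conditions are: flawlessness together with nonnegativity, giving $1\leq h_1\leq h_2$ and $h_3\geq 1$; differentiability of the first half from Lemma~\ref{hausel}, i.e.\ $\Delta(1,h_1,h_2)$ is an $O$-sequence, equivalently $h_2-h_1\leq\binom{h_1}{2}$; the Macaulay growth bound $h_3\leq h_2^{\langle 2\rangle}$; and a \emph{purity} bound coupling $h_2$ and $h_3$, coming from the requirement that the top level $G$ have degree-$2$ shadow of the prescribed size $h_2$. Concretely, $|G|=h_3$ monomials can carry only a Macaulay-type bounded number of degree-$2$ divisors, which forces an inequality of the form $h_2\leq\psi(h_1,h_3)$, equivalently a lower bound on $h_3$ in terms of $h_2$ (this is what rules out, say, $(1,3,5,1)$). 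Each of these inequalities is monotone in the relevant coordinates: for fixed $(h_1,h_3)$ the admissible $h_2$ is cut out by the lower bound $h_2\geq h_1$ together with the two upper bounds from differentiability and purity, hence is an interval, and similarly for the slices in $h_1$ and in $h_3$. Since an intersection of coordinate-intervals is again a coordinate-interval, coordinate-convexity of $P_3$ — and therefore the ICP for $e=3$ — follows \emph{provided} the listed conditions are also sufficient.

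The main obstacle is exactly this sufficiency: showing that every integer vector satisfying the above inequalities is realized by an honest pure order ideal, and doing so uniformly enough that no intermediate value is skipped. This is the genuinely combinatorial heart of the argument, since the ``reverse'' purity bound relating $h_2$ and $h_3$ is an extremal shadow problem rather than a Macaulay growth condition, and, unlike for arbitrary $O$-sequences, one cannot add or delete a single top monomial without disturbing the lower entries. I would handle it by an explicit incremental construction: starting from a canonical (compressed or lex-type) generating set realizing one extreme value and then modifying $G$ one monomial at a time — adding a degree-$3$ monomial whose degree-$2$ shadow already lies in $X$ so as to raise $h_3$ without changing $h_1,h_2$, or trading monomials to raise $h_2$ by exactly one while holding $h_1$ and $h_3$ fixed. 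The hardest case to control is the middle entry $h_2$, where one must adjust the number of degree-$2$ divisors while simultaneously preserving both the variable count $h_1$ and the top count $h_3$; verifying that such single-step moves always exist throughout the admissible interval, using Macaulay's theorem and Lemma~\ref{hausel} to guarantee enough room, is where the real work lies.
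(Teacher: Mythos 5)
First, a point of reference: the paper does not prove this lemma at all --- it is imported verbatim from \cite[Theorem~4.3]{BMMNZ}, so your attempt is to reprove an external result from scratch, and the bar is a complete argument. Your reformulation of the ICP as coordinate-convexity of the set $P_e$ of pure $O$-sequences is correct, and the cases $e\leq 2$ are fine. But the $e=3$ plan has a genuine gap: it rests on a complete numerical characterization of socle degree $3$ pure $O$-sequences (a short list of coordinate-monotone inequalities, proved sufficient by incremental constructions), and no such characterization exists --- indeed \cite{BMMNZ} prove the degree-$3$ ICP precisely \emph{without} one, working directly with the two pure order ideals realizing the endpoint sequences, and the characterization problem in socle degree $3$ remains open. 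Concretely, your listed conditions are not sufficient: $(1,2,3,1)$ is flawless ($2\leq 3$), has differentiable first half ($\Delta(1,2,3)=(1,1,1)$ is an $O$-sequence), satisfies the Macaulay growth bound from $h_2$ to $h_3$, and satisfies the shadow bound $h_2\leq 3h_3$ as well as $h_2\leq\binom{h_1+1}{2}$; yet it is not a pure $O$-sequence, since with $h_3=1$ the unique maximal monomial must involve exactly two variables, hence is $y_1^2y_2$ or $y_1^3$ and has at most two degree-$2$ divisors. So the ``purity bound'' $h_2\leq\psi(h_1,h_3)$ you invoke but never specify must couple all three coordinates through an exact-support extremal shadow problem with no known closed form; and the assertion that $P_3$ is cut out by such monotone inequalities is essentially equivalent to (in fact stronger than) the statement you are trying to prove, so it cannot simply be posited.

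The second half of your plan concedes this (``where the real work lies'') but does not close it: the single-step exchange moves --- in particular raising $h_2$ by exactly one while holding $h_1$ and $h_3$ fixed, without violating purity --- are asserted, not constructed, and they are exactly the delicate content of the published proof. A structural warning that any soft convexity heuristic must fail somewhere: the ICP is \emph{false} in general (Constantinescu and Varbaro \cite{CoVa} disproved it already in four variables), so a correct proof for $e=3$ must exploit the socle degree in an essential, quantitative way, beyond the observation that a degree-$3$ monomial has at most three degree-$2$ divisors. As it stands, your submission is a plausible research plan whose two load-bearing steps --- the sufficiency of a (still unspecified) inequality description of $P_3$, and the existence of the incremental moves --- are precisely the open difficulties, one of which is contradicted in its most natural form by the example above.
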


The next proposition on differentiable $O$-sequences is essential in proving Conjecture~\ref{ccc} for socle
degrees at most 3. While it is possible to show it using Macaulay's
classification of $O$-sequences (see \cite{BH}), the required
arguments are lengthy.  We thank an anonymous reader for suggesting
the shorter proof given below. We start with a lemma, and then record the proposition only in socle degree 3, which is the case we are interested in here.

\begin{lemma}\label{lemma:shifted-sum}
Let $h$ and $h'$ be $O$-sequences such that $h'_i\leq h_i$ for all
$i\leq e-1$.  Then $h''_i=h_i+h'_{i-1}$ is an $O$-sequence.
\end{lemma}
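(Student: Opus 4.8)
The plan is to prove the stronger statement that $h''$ is the $h$-vector of an honest monomial order ideal, which by the equivalence recalled in Section~\ref{sec:prel} (the $h$-vectors of order ideals are exactly the $O$-sequences) suffices. The key idea is to realize $h$ and $h'$ not by arbitrary order ideals, but by the \emph{lex} order ideals, because those are automatically nested. Concretely, I would fix an integer $N\geq \max(h_1,h_1')$ and work in $S=k[y_1,\dots,y_N]$ with the lex order $y_1>\cdots>y_N$. By Macaulay's theorem there are lex ideals $L$ and $L'$ in $S$ with $h_{S/L}=h$ and $h_{S/L'}=h'$; let $X$ and $X'$ be their respective sets of standard monomials. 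Each of $X,X'$ is an order ideal (the standard monomials of any monomial ideal are closed under division), with $|X_i|=h_i$ and $|X_i'|=h_i'$.

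The first point I would establish is the componentwise containment $X_i'\subseteq X_i$ for every $i$. This is where the choice of lex realizations is essential: by construction $X_i'$ consists of the $h_i'$ lex-smallest monomials of degree $i$, and $X_i$ of the $h_i$ lex-smallest ones, so the hypothesis $h_i'\leq h_i$ (for $i\leq e-1$) forces the smaller final segment to sit inside the larger. For an arbitrary pair of order ideals realizing $h'$ and $h$ no such nesting need hold, so I expect this to be the heart of the argument; everything after it is formal.

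Next I would introduce a fresh variable $t$ and set $X''=X\cup tX'$, where $tX'=\{tM\mid M\in X'\}$. A short divisibility check shows $X''$ is again an order ideal: a divisor of a monomial of $X$ lies in $X$, and a divisor $W$ of $tM$ with $M\in X'$ either avoids $t$, in which case $W\mid M$ gives $W\in X'\subseteq X\subseteq X''$ by the containment just proved, or has the form $tM'$ with $M'\mid M$, whence $W\in tX'$. Finally I would read off the $h$-vector: since the monomials of $X$ omit $t$ while those of $tX'$ are divisible by $t$, the degree-$i$ piece of $X''$ is the disjoint union $X_i\sqcup tX'_{i-1}$, of cardinality $h_i+h'_{i-1}=h''_i$. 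Thus $X''$ is a monomial order ideal with $h$-vector $h''$, and so $h''$ is an $O$-sequence.

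I expect no serious obstacle beyond the nesting $X'\subseteq X$; the lex structure converts the numerical domination $h'\leq h$ into an actual inclusion of monomial sets, after which the new-variable construction does the rest. It is worth noting that $X''$ need not be pure even when $X$ and $X'$ are, which is consistent with the lemma asserting only the $O$-sequence (and not the pure $O$-sequence) conclusion; the pureness in Conjecture~\ref{ccc} is exactly what requires the additional difference hypotheses and the ICP.
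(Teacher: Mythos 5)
Your proposal is correct and is essentially the paper's own proof: both realize $h$ and $h'$ by lex-segment order ideals so that $h'_i\leq h_i$ yields the containment $X'\subseteq X$, then adjoin a new variable $t$ and take $X''=X\cup tX'$, whose degree-$i$ piece has cardinality $h_i+h'_{i-1}$. Your version is slightly more careful than the paper's on one minor point (working in a common ambient variable set so the two lex segments are genuinely nested), but the construction and key idea are identical.
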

\begin{proof}
Let $X$ be the order ideal formed by taking, for each $i$, the last $h_i$ monomials
in the  lexicographic order on some set of variables
$y_1,y_2,\dotsc ,y_{h_1}$. Since $h$ is an $O$-sequence, $X$ is an
order ideal by Macaulay's theorem (\cite{BH}) and has $h$-vector $h$.
Construct similarly another order ideal, $X'$, to have $h$-vector $h'$.
Since $h'_i\leq h_i$, we have $X'\subseteq X$.

Let $t$ be a new variable and consider $X''=X\cup tX'$, which is clearly an order ideal.  Since
$X'\subseteq X$, it is easy to see that we can calculate the
$h$-vector of $X''$ as $h_i(X'')=h_i(X)+h_{i-1}(X')$, showing
that $h''$ is an $O$-sequence.
\end{proof}

\begin{proposition}\label{mac}
Let $h=(1,r-1,a,b)$ and $h'=(1,r',c)$ be two differentiable
$O$-sequences such that $(\Delta h')_i\leq (\Delta h)_i$ for $i\leq
2$. Then the shifted sum of $h$ and $h'$, $h''=(1,r,a+r',b+c)$, is also a
differentiable $O$-sequence.
\end{proposition}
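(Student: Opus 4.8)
The plan is to prove that $h''=(1,r,a+r',b+c)$ is a differentiable $O$-sequence by showing its first difference $\Delta h''$ is an $O$-sequence, and the natural vehicle is Lemma~\ref{lemma:shifted-sum}. Since $h$ and $h'$ are differentiable, $\Delta h = (1, r-2, a-r+1, b-a)$ and $\Delta h' = (1, r'-1, c-r')$ are themselves $O$-sequences. I would first compute $\Delta h''$ directly. Writing out the differences, $\Delta h'' = (1,\,r-1,\,a+r'-r,\,b+c-a-r')$. The key observation is that this should factor as a shifted sum of $\Delta h$ and $\Delta h'$: comparing entry by entry, one checks $(\Delta h'')_i = (\Delta h)_i + (\Delta h')_{i-1}$ for each $i$, which holds because $\Delta$ commutes with the shifted-sum operation (the shifted sum of $h$ and $h'$ differences to the shifted sum of $\Delta h$ and $\Delta h'$). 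So proving $h''$ differentiable reduces to proving that the shifted sum of the two $O$-sequences $\Delta h$ and $\Delta h'$ is again an $O$-sequence.

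Now I would invoke Lemma~\ref{lemma:shifted-sum} applied to $G = \Delta h$ and $G' = \Delta h'$. That lemma gives exactly what I want — that $G_i + G'_{i-1}$ is an $O$-sequence — \emph{provided} the hypothesis $G'_i \le G_i$ holds for all $i$ up to the relevant degree. But this hypothesis is precisely $(\Delta h')_i \le (\Delta h)_i$, which is given in the statement for $i \le 2$. Since $\Delta h'$ has socle degree $2$ (as $h'$ has socle degree $2$) and $\Delta h$ has socle degree $3$, the index range $i \le 2$ is exactly the range needed for the shifted sum to be well-defined and for Lemma~\ref{lemma:shifted-sum} to apply. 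Thus $\Delta h''$ is an $O$-sequence, and so $h''$ is differentiable by definition.

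Before concluding, I should separately verify that $h''$ is itself an $O$-sequence (not merely that $\Delta h''$ is), but this is immediate: the remark in the excerpt notes that a differentiable vector is automatically an $O$-sequence, so differentiability of $h''$ gives both facts at once. I would also double-check that all entries of $h''$ are nonnegative and that the length/socle-degree bookkeeping is consistent — e.g.\ that $r = (r-1)+1$ matches the $h_1$ entry of the shifted sum $(1, h_1 + 1, \dots)$ from Conjecture~\ref{ccc}, confirming $r' $ contributes correctly.

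**The main obstacle** I anticipate is purely bookkeeping rather than conceptual: getting the index alignment in the identity $(\Delta h'')_i = (\Delta h)_i + (\Delta h')_{i-1}$ exactly right, particularly at the boundary index $i=2$ where the hypothesis inequality is tightest and where $\Delta h'$ reaches its top degree. One must be careful that the shifted-sum structure of $h''$ translates cleanly under $\Delta$ and that the hypothesis $(\Delta h')_i \le (\Delta h)_i$ is being used at precisely the indices demanded by Lemma~\ref{lemma:shifted-sum}. Once this verification is done correctly, the proof is short, since all the real content is packaged into Lemma~\ref{lemma:shifted-sum}.
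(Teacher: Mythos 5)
Your proposal is correct and follows essentially the same route as the paper's own proof: apply Lemma~\ref{lemma:shifted-sum} to $\Delta h$ and $\Delta h'$ (the hypothesis $(\Delta h')_i\leq(\Delta h)_i$ for $i\leq 2$ being exactly what the lemma requires), then observe that the first-difference operator commutes with the shifted sum, so $\Delta h''$ is an $O$-sequence and $h''$ is differentiable. Your entrywise verification of $(\Delta h'')_i=(\Delta h)_i+(\Delta h')_{i-1}$ checks out, and the paper's argument is identical in substance, merely stated more briefly.
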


\begin{proof}
Since $h$ and $h'$ are differentiable, $\Delta h$ and $\Delta h'$ are
$O$-sequences.  The hypotheses allow us to apply
Lemma~\ref{lemma:shifted-sum} to the first differences.  Thus the
shifted sum of $\Delta h$ and $\Delta h'$ is an $O$-sequence.  Clearly
the shifted sum of $\Delta h$ and $\Delta h'$ is equal to the first
difference of the shifted sum of $h$ and $h'$, which is $h''$.  Since
$\Delta h''$ is an $O$-sequence, $h''$ is a differentiable $O$-sequence.
\end{proof}

\begin{lemma}\label{rrr}
Let $t$ and $r$ be positive integers. Then $h=(1,r,r,t)$ is a pure
$O$-sequence if and only if $\lceil r/3 \rceil \leq t\leq r$.
\end{lemma}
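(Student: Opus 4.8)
The plan is to prove both implications separately, using explicit constructions together with the Interval Property for the easy direction and reserving the bound $t\le r$ as the technical heart.

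For sufficiency I would first realize the two extreme values of $t$ and then interpolate. The value $t=r$ is realized by the pure order ideal generated by $y_1^3,\dots,y_r^3$, whose $h$-vector is visibly $(1,r,r,r)$. The value $t=\lceil r/3\rceil$ is realized by partitioning the $r$ variables into $\lceil r/3\rceil$ blocks of size at most $3$ and placing on each block a single cube: $y_ay_by_c$ on a block of size $3$, $y_a^2y_b$ on a block of size $2$, and $y_a^3$ on a block of size $1$. On each block the number of degree-$2$ monomials equals the number of its variables, and there is exactly one maximal (degree-$3$) monomial, so the disjointness of the supports yields the $h$-vector $(1,r,r,\lceil r/3\rceil)$. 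Since both endpoints are pure $O$-sequences, the Interval Property in socle degree $3$ (Lemma~\ref{lemma:ICP3}) immediately fills in every intermediate value, giving $(1,r,r,t)$ for all $\lceil r/3\rceil\le t\le r$.

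For the necessity of the lower bound I would argue by a support count. Let $X$ be a pure order ideal with $h$-vector $(1,r,r,t)$; its $r$ variables $y_1,\dots,y_r$ all occur, and by purity each variable divides one of the $t$ maximal monomials, every one of which has degree $3$ and hence support of size at most $3$. Thus the supports of the $t$ maximal monomials cover all $r$ variables, forcing $r\le 3t$, i.e.\ $t\ge\lceil r/3\rceil$.

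The upper bound $t\le r$ is where the hypothesis $h_1=h_2=r$ does essential work, and I expect it to be the main obstacle. The idea is to encode the $r$ degree-$2$ monomials of $X$ as a graph with loops on the vertex set $\{1,\dots,r\}$, where $y_iy_j$ is an edge and $y_i^2$ is a loop; this graph has $r$ vertices and $r$ edges, and purity guarantees no isolated vertices. The key observation is that purity forces every edge to extend to a degree-$3$ monomial of $X$: a loop $y_i^2$ extends via $y_i^3$, but an ordinary edge $y_iy_j$ extends only if an endpoint carries a loop (giving $y_i^2y_j$) or the edge lies in a triangle (giving $y_iy_jy_k$). Consequently no component can be a tree, so each component already contains a loop or a cycle and hence has at least as many edges as vertices; since the total number of edges equals the total number of vertices, every component must be exactly unicyclic. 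A short case analysis of the unique cycle, a loop or a cycle of length $\ge 3$, combined again with the requirement that every edge extend, shows that the only admissible components are loop-stars (a loop at a center $c$ together with edges $cy_j$) and triangles. In a loop-star on $m$ vertices the degree-$3$ monomials of $X$ lie among $y_c^3$ and the $y_c^2y_j$, at most $m$ of them, matching the $m$ edges; in a triangle the only degree-$3$ monomial is $y_iy_jy_k$, far fewer than the $3$ edges. Summing the per-component bound (number of degree-$3$ monomials $\le$ number of edges) over all components yields $t\le r$. The delicate point, and the step I would scrutinize most carefully, is this classification of admissible components: one must rule out long cycles and hanging trees, which would leave non-extendable edges and hence violate purity, and check that the concentrated configurations responsible for non-examples such as $(1,2,3,4)$ are exactly the ones excluded once $h_1=h_2$ forces the global edge count to equal the vertex count.
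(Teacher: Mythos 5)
Your proposal is correct, and it necessarily differs from the paper, because the paper gives no argument of its own: its entire proof of this lemma is the citation ``See the proof of \cite[Theorem 5.8]{BMMNZ}'', deferring both directions to that memoir. You instead supply a self-contained, elementary proof. Your two endpoint constructions are valid ($y_1^3,\dotsc,y_r^3$ for $t=r$; disjoint blocks of size at most $3$ for $t=\lceil r/3\rceil$), and interpolating with Lemma~\ref{lemma:ICP3} is legitimate since that case of the ICP is proved in \cite{BMMNZ} and quoted in the paper; in fact you could bypass the ICP entirely by mixing block sizes ($a$ singletons, $b$ pairs, $c$ triples with $a+2b+3c=r$ and $a+b+c=t$ realizes every intermediate $t$ directly). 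The covering argument for $t\geq\lceil r/3\rceil$ is exactly right. Your graph-theoretic proof of $t\leq r$ also holds up: purity forces every degree-$2$ monomial of $X$ to divide a degree-$3$ one, and divisor-closedness converts $y_i^2y_j\in X$ into a loop at $i$ and $y_iy_jy_k\in X$ into a triangle, so tree components are excluded; the global count (number of edges equals number of vertices equals $r$) then forces every component to be unicyclic, and your classification into loop-stars and bare triangles is complete, since a unique cycle of length at least $4$ leaves its own edges non-extendable, a pendant edge in such components is either non-extendable or creates a second cycle, and two loops in one component would give two independent cycles. Two micro-points you should make explicit when writing this up: a loop $y_i^2$ may extend via $y_i^2y_j$ rather than $y_i^3$ (your loop-star count already accommodates this), and the per-component summation tacitly uses that every degree-$3$ monomial of $X$ has support inside a single component, which follows because its degree-$2$ divisors are edges joining its support vertices. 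As for what each route buys: the paper's citation keeps its exposition short and leans on the general machinery of \cite{BMMNZ}, whereas your argument makes the lemma independent of that memoir except for the quotable socle-degree-$3$ ICP --- and, with the mixed-block construction, independent of it altogether.
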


\begin{proof}
See the proof of \cite[Theorem 5.8]{BMMNZ}.
\end{proof}

We are now ready to prove our main result of this section.

\begin{theorem}\label{2}
Conjecture~\ref{ccc} holds in socle degrees $e\leq 3$.
\end{theorem}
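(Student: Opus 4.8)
The plan is to dispose of the small socle degrees directly and then concentrate on $e=3$, where the Interval Property (Lemma~\ref{lemma:ICP3}) does the decisive work. For $e\le 1$ the statement is immediate, and for $e=2$ one argues from the classification of socle degree $2$ pure $O$-sequences — namely $(1,a,b)$ is pure exactly when $\lceil a/2\rceil\le b\le\binom{a+1}{2}$ — together with the hypotheses $1\le h_1'\le h_1$, which force $\lceil(h_1+1)/2\rceil\le h_2+h_1'\le\binom{h_1+2}{2}$. So assume $e=3$, write $h=(1,h_1,h_2,h_3)$ and $h'=(1,h_1',h_2')$, and put $A=h_1+1$, $B=h_2+h_1'$; the goal is that $h''=(1,A,B,h_3+h_2')$ is a pure $O$-sequence.

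First I would check that the truncation $(1,A,B)$ is a nondecreasing, differentiable $O$-sequence. Monotonicity is clear: flawlessness of $h$ (Lemma~\ref{hausel}) gives $h_1\le h_2$, and $h_1'\ge 1$, so $A=h_1+1\le h_2+h_1'=B$. For differentiability, the key observation is that $\Delta(1,A,B)=(1,h_1,h_2+h_1'-h_1-1)$ is precisely the shifted sum of $\Delta(1,h_1,h_2)=(1,h_1-1,h_2-h_1)$ and $\Delta(1,h_1')=(1,h_1'-1)$. The first of these is an $O$-sequence because the first half of $h$ is differentiable (Lemma~\ref{hausel}), the second is trivially one, and the hypothesis $(\Delta h')_1\le(\Delta h)_1$ is exactly what Lemma~\ref{lemma:shifted-sum} needs in order to conclude that $\Delta(1,A,B)$ is an $O$-sequence.

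The heart of the matter is to bracket the socle value $h_3+h_2'$ between the smallest and largest top degrees of pure $O$-sequences having the fixed first half $(1,A,B)$. By the Interval Property in socle degree $3$ (Lemma~\ref{lemma:ICP3}), the set of integers $t$ for which $(1,A,B,t)$ is a pure $O$-sequence is an interval $[\ell,u]$, so it suffices to show $\ell\le h_3+h_2'\le u$. The lower endpoint is controlled by the covering bound underlying Lemma~\ref{rrr}: in a pure order ideal of socle degree $3$, the top monomials must cover all $B$ degree-$2$ monomials, each covering at most three, so $\ell\ge\lceil B/3\rceil$. The required inequality $\lceil B/3\rceil\le h_3+h_2'$ then follows from the same covering bounds applied to $h$ and $h'$: purity forces $h_3\ge\lceil h_2/3\rceil$ and $h_2'\ge\lceil h_1'/2\rceil$, whence $h_3+h_2'\ge (h_2+h_1')/3=B/3$, and being an integer it is $\ge\lceil B/3\rceil$. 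For the upper endpoint, a maximal pure extension of $(1,A,B)$, taken together with the Macaulay bounds and the hypotheses $h_3\le u(h_1,h_2)$ and $h_2'\le\binom{h_1'+1}{2}$, yields $u\ge h_3+h_2'$.

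The main obstacle is not the numerics above but realizing the two bracketing vectors by genuinely \emph{pure} order ideals. The shifted-sum recipe $X\cup tX'$ of Lemma~\ref{lemma:shifted-sum} produces a pure order ideal exactly when $X$ and $X'$ are themselves pure and nested ($X'\subseteq X$), and it is precisely the construction of such nested pure order ideals with prescribed extreme top degrees that forces the restriction to socle degree $\le 3$. Concretely, the minimal-top bracket should be produced from Lemma~\ref{rrr} and its underlying construction (reducing, through the Interval Property, toward the equal-middle-entry case), while the maximal-top bracket comes from the lex-type pure ideals; the delicate point is that bumping the top of one factor must not destroy the nesting $X'\subseteq X$, which the inequalities $h_1'\le h_1$ and $h_2'\le h_2$ are there to protect. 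Once both brackets are in hand, Lemma~\ref{lemma:ICP3} interpolates across the interval $[\ell,u]$ and delivers that $(1,A,B,h_3+h_2')=h''$ is a pure $O$-sequence, completing the socle degree $3$ case and hence Conjecture~\ref{ccc} for $e\le 3$.
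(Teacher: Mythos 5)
Your outline is sound in the nondecreasing case: when $h_3+h_2'\ge B$, bracketing the top via the maximal differentiable extension (Proposition~\ref{mac} plus the fact from \cite{BMMNZ} that the maximal pure top equals the maximal differentiable top, and that differentiable $O$-sequences are pure) is essentially the paper's first case, and there one does not even need Lemma~\ref{lemma:ICP3}, since differentiability interpolates the top directly. The genuine gap is the \emph{decreasing} case $h_3+h_2'<B$. There you must produce a pure $O$-sequence $(1,A,B,t_0)$ with $t_0\le h_3+h_2'$ in order to conclude $\ell\le h_3+h_2'$; instead you prove two inequalities pointing the same way, $\ell\ge\lceil B/3\rceil$ and $h_3+h_2'\ge\lceil B/3\rceil$, which logically cannot yield $\ell\le h_3+h_2'$. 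Nor is the covering bound attained in general: $(1,2,3,1)$ is not pure, since a single degree-3 monomial in two variables has at most two degree-2 divisors, so for $(1,2,3,t)$ one has $\ell=2>\lceil 3/3\rceil$. Your proposed repair --- ``reducing, through the Interval Property, toward the equal-middle-entry case'' of Lemma~\ref{rrr} --- is circular: the ICP varies one coordinate with all others fixed, so lowering the middle entry from $B$ toward $A$ at the fixed top value $h_3+h_2'$ presupposes that $(1,A,B,h_3+h_2')$ is pure, which is the statement to be proved.

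The paper escapes exactly this trap by interpolating the \emph{middle} coordinate with the top fixed at $b+c=h_3+h_2'$, rather than fixing the first half and moving the top. It constructs an explicit pure order ideal $W''$ generated by $M_1,\dots,M_b,\,y_rN_1,\dots,y_rN_c$, where the $M_i$ and $N_j$ are the maximal monomials of pure order ideals realizing $h$ and $h'$ and $y_r$ is a fresh variable; its $h$-vector is $(1,r,a_1,b+c)$ with $a_1\ge a+r'$. Note that purity of this union requires no nesting whatsoever --- your claim that the shifted-sum recipe is pure ``exactly when'' $X'\subseteq X$ is incorrect; nesting is what makes the union an order ideal with the exact shifted-sum $h$-vector in Lemma~\ref{lemma:shifted-sum}, and the paper deliberately forgoes it, accepting $a_1\ge a+r'$ in place of equality. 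The lower middle bracket is then $(1,r,r,b+c)$ via Lemma~\ref{rrr} when $b+c\le r$ (this is where the covering bound legitimately enters, applied to $a+r'$, not to $B$ as endpoint of the top interval), or $(1,r,a_0,b+c)$ with $a_0$ the least middle entry making the sequence differentiable when $r<b+c<a+r'$; Lemma~\ref{lemma:ICP3} applied in degree 2 between $a_0$ (or $r$) and $a_1$ then delivers $h''$. This pivot to the middle coordinate is the key idea your fixed-first-half, top-only bracketing is missing, and without it the case $h_3+h_2'<B$ remains unproved.
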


 \begin{proof}
The case $e=1$ is trivial. Suppose that $e=2$. We want to show that, if
$(1,r-1,a)$ and $(1,r')$ are pure $O$-sequences and $1\leq r'\leq
r-1$, then $h=(1,r,a+r')$ is also pure. But by
\cite[Corollary~4.7]{BMMNZ}, $(1,r-1,a)$ being pure means that $\lceil
(r-1)/2 \rceil \leq a \leq \binom{r}{2}$. We have
\[\lceil (r+1)/2 \rceil =\lceil (r-1)/2 \rceil +1\leq a+r' \leq
\binom{r}{2}+(r-1)=\binom{r+1}{2}-1,\]
 and the result follows by
invoking again \cite[Corollary 4.7]{BMMNZ}.

We now turn to the case $e=3$. Let $h=(1,r-1,a,b)$ and $h'=(1,r',c)$
be pure $O$-sequences satisfying the hypotheses of
Conjecture~\ref{ccc}. Most importantly, since $\lceil 3/2 \rceil =2$,
$(\Delta h')_i\leq (\Delta h)_i$, for $i\leq 2$. Our goal is to show
that $h''=(1,r,a+r',b+c)$ is also a pure $O$-sequence.

 Notice that, by Lemma~\ref{hausel}, $a\geq r-1$, and therefore
 $a+r'\geq r$. Also, given $a$ and $r$, by \cite[Theorem 3.3 and Corollary 3.2]{BMMNZ}, the maximum value, $B$, that $b$ may assume
 in the pure $O$-sequence $h=(1,r-1,a,b)$ coincides with the maximum
 $b$ that makes $h$ differentiable. The same is clearly true for the
 maximum value, $C$, of $c$ in $h'$. We consider two cases depending
 on whether $b+c\geq a+r'$.

 If $b+c\geq a+r'$ then all values of $b+c$ making $h''$ a pure
 $O$-sequence are in the range $[a+r',B+C]$. Furthermore,
 $h=(1,r-1,a,B)$ and $h'=(1,r',C)$ being differentiable implies, by
 Proposition~\ref{mac}, that their shifted sum, $(1,r,a+r',B+C)$, is
 also a differentiable $O$-sequence. From Macaulay's theorem
 \cite{BH,macau} it easily follows that $h''$ is differentiable for
 all values of $b+c\in [a+r',B+C]$.  By \cite[Corollary 3.2]{BMMNZ},
 every finite differentiable $O$-sequence is pure and thus $h''$ is
 pure.

 Assume now that $b+c < a+r'$. Let $W$ be a pure order ideal of
 monomials in the variables $y_1,\dots ,y_{r-1}$ generated by
 $M_1,\dots ,M_b$ and having $h$-vector $h$, and let $W'$ be generated
 by monomials $N_1,\dots ,N_c$, in the variables $y_1,\dots ,y_{r'}$,
 and have $h$-vector $h'$. Consider the pure order ideal $W''$
 generated by the $b+c$, degree 3 monomials $M_1,\dots ,M_b,y_r
 N_1,\dots ,y_r N_c$ (which contain all the variables $y_1,\dots
 ,y_r$). Since each of the $y_1,\dots ,y_{r'}$ appears in some $N_i$,
 all the $r'$ monomials $y_r y_1,\dots ,y_r y_{r'}$ must appear among
 the degree 2 monomials of $W''$, along with (at least) the $a$
 divisors of the $M_i$'s. It follows that the $h$-vector of $W''$ is
 $h_{W''}=(1,r,a_1,b+c)$, for some $a_1\geq a+r'$. In particular,
 $h_{W''}$ is a pure $O$-sequence.

 Consider the case when $b+c\leq r$. Since any degree 3 monomial has
 at most three degree 2 monomial divisors, we have $b+c\geq \lceil
 (a+r')/3 \rceil \geq \lceil r/3 \rceil $. Therefore, by
 Lemma~\ref{rrr}, $(1,r,r,b+c)$ is a pure $O$-sequence. Because $r\leq
 a+r'\leq a_1$, it follows by employing again Lemma~\ref{lemma:ICP3}
 that $h''$ is pure, as desired.

 It remains to consider the case when $r<b+c<a+r'$. Let
 $a_0$ be the least integer (depending on $r$ and $b+c$) such that
 $h_A=(1,r,a_0,b+c)$ is a differentiable $O$-sequence. It is easy to
 see that, under the current assumptions, $a_0$ always exists and
 satisfies $a_0\leq b+c<a+r'\leq a_1$. Since by \cite[Corollary 3.2]{BMMNZ} $h_A$ is pure, Lemma~\ref{lemma:ICP3} (applied to the
 interval defined in degree 2 by $h_A$ and $h_{W''}$) gives that
 $h''=(1,r,a+r',b+c)$ is a pure $O$-sequence. This concludes the proof
 of the theorem.
\end{proof}


\section{Stanley's conjecture for rank 3}\label{sec:stanl-conj-dim2}

The goal of this section is to settle Stanley's conjecture
for matroids of rank 3 (or dimension 2).

\begin{remark}\label{gorci}
It is a well-known fact (see, e.g.,\ \cite{BH}) that the $h$-vector of
a complete intersection $S/I$ is entirely determined by the degrees of
the generators of $I$. In fact, given the $h$-vector $h$ of a complete
intersection $S/I$, where $I$ is generated in degrees $d_1,\dots
,d_t$, it is a standard exercise to show (for instance using
\emph{Macaulay's inverse systems}; see \cite{Ge,IK} for an
introduction to this theory) that $h$ is the pure $O$-sequence given
by the order ideal whose unique maximal monomial is $y_1^{d_1-1}\cdots
y_t^{d_t-1}$. In particular, it follows that Stanley's conjecture
holds for the class of all complete intersection matroids.
\end{remark}

The next few lemmas will be technically essential to prove our main
result. Lemma~\ref{lemma:sssttt}, (1) states a well-known fact in
matroid theory; we include a brief argument for
completeness. Lemma~\ref{lemma:sssttt}, (2) was erroneously stated in
\cite{Sto} in a remark without the assumption that $\dim\Delta\leq 2$. We say that two vertices $i,j\in\Delta$ are in \emph{series} if for every minimal generator $u\in I_\Delta$, $x_i\mathop{\mid} u$ if and only if $x_j\mathop{\mid} u$.  A maximal set of vertices with each pair in series is called a \emph{series class}.

\begin{lemma}\label{lemma:sssttt}
Let $\Delta$ be a matroid complex that is not a cone. Then:
\begin{enumerate}
\item For any $v\in\Delta$, $\link_\Delta(v)$ is not a cone.
\item Assume that $d=\dim\Delta\leq 2$ and that, for each vertex
  $w\in\Delta$, $\Delta_{-w}$ is a cone.  Then $\Delta$ is a complete
  intersection.
\end{enumerate}
\end{lemma}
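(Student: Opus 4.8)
The plan is to translate both statements into the language of matroid duality and use the standard fact recalled in the excerpt: a matroid complex is a cone precisely when it has a \emph{coloop}, i.e.\ an element lying in no circuit, equivalently a variable dividing no minimal generator of $I_\Delta$.

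For part (1), write $M$ for the matroid on $\Delta$ and recall that $\link_\Delta(v)$ is the contraction $M/v$. Since $\Delta$ is not a cone, $M$ has no coloop, and since $v\in\Delta$ the element $v$ is not a loop, so the bases of $M/v$ are exactly the sets $B$ with $B\cup\{v\}$ a basis of $M$. I would argue by contradiction: if some $w\neq v$ were a coloop of $M/v$, then $w$ would lie in every basis of $M$ containing $v$. Because $M$ has no coloop, there is a basis $B_0$ avoiding $w$. If $v\in B_0$ this already contradicts the previous sentence. If $v\notin B_0$, take the fundamental circuit $C_v$ of $v$ with respect to $B_0$ (which contains an element other than $v$, as $v$ is not a loop) and exchange: for any $e\in C_v\setminus\{v\}$ the set $B_0-e+v$ is a basis containing $v$ but still avoiding $w$, again a contradiction. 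Hence $M/v$ has no coloop, i.e.\ $\link_\Delta(v)$ is not a cone. (Alternatively, in one line via duality: the coloops of $M/v$ are the loops of $(M/v)^\ast=M^\ast\setminus v$, which are the coloops of $M$ other than $v$, hence none.)

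For part (2), the first task is to read off what the hypothesis says cocircuit-theoretically; here we may assume $M$ is loopless, since loops are not vertices and do not affect the conclusion. The deletion $\Delta_{-w}=M\setminus w$ has as minimal generators $x_w$ together with the circuits of $M$ avoiding $w$; thus $\Delta_{-w}$ is a cone with apex $u\neq w$ exactly when $u$ lies in no circuit of $M$ avoiding $w$, i.e.\ $u$ is a coloop of $M\setminus w$. Since $M$ itself has no coloop, a single-element cocircuit $\{u\}$ of $M\setminus w$ must arise from a cocircuit $D$ of $M$ with $D\setminus w=\{u\}$ and $D\neq\{u\}$, forcing $D=\{u,w\}$ to be a $2$-cocircuit of $M$. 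Therefore the hypothesis that every $\Delta_{-w}$ is a cone says precisely that every element of $M$ lies in a $2$-element cocircuit. Dually, in $M^\ast$ every element lies in a $2$-circuit, i.e.\ every parallel class of $M^\ast$ has at least two elements. This is the step that converts the per-vertex cone hypothesis into a uniform statement about the whole matroid.

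Now I would run a counting argument in $M^\ast$, and this is exactly where $\dim\Delta\leq 2$ (rank $3$) is indispensable, the general statement being false without it. Let $n$ be the number of vertices and $m$ the number of parallel classes of $M^\ast$. A basis meets each parallel class at most once, so $m\geq r(M^\ast)=n-3$; and each class has at least two elements, so $n\geq 2m$. Combining gives $n-3\leq m\leq n/2$, whence $n\leq 6$, and a quick inspection of $n=4,5,6$ (using that a loopless rank-$1$ matroid has a single parallel class to settle $n=4$) forces $m=r(M^\ast)$ in every case. Equality $m=r(M^\ast)$ means the simplification of $M^\ast$ is free, so $M^\ast$ is the direct sum of its parallel classes, and dualizing shows $M$ is a direct sum of circuits; its minimal generators then have pairwise disjoint supports, so $\Delta$ is a complete intersection. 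The main obstacle is the cocircuit translation of the cone hypothesis together with the rank-$3$ counting step, where the dimension bound must be used; the remaining manipulations are routine matroid duality.
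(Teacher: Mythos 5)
Your proof is correct, and it reaches the paper's conclusion by a genuinely different route, most notably in part (2). For part (1), the two arguments are close cousins: the paper supposes $\link_\Delta(v)$ is a cone with apex $w$, so every facet of $\Delta$ containing $v$ contains $w$, and then uses purity of $\Delta_{-w}$ (which has full dimension precisely because $\Delta$ is not a cone) to extend $G-\{w\}$ to a facet of $\Delta$ containing $v$ but avoiding $w$ --- this is the same basis-exchange step as your $B_0-e+v$ construction via the fundamental circuit, just phrased simplicially; your parenthetical duality one-liner (coloops of $M/v$ are the coloops of $M$ other than $v$, via $(M/v)^\ast=M^\ast\setminus v$) is a slicker alternative that the paper does not use. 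For part (2), the paper simply cites as a ``standard fact'' that the hypothesis partitions the vertices into series classes $S_1,\dotsc,S_k$ with $\abs{S_i}\geq 2$, and then enumerates by hand: $k\leq 2$ in rank 2 and $k\leq 3$ in rank 3, identifying the boundary of a simplex, the 4-cycle, the bipyramid over a triangle, and the octahedron boundary, each visibly a complete intersection. You instead prove the series-pair condition from scratch (the cocircuit computation showing a coloop of $M\setminus w$ forces a 2-cocircuit $\{u,w\}$, using that $M$ itself has no coloop), dualize to ``every parallel class of $M^\ast$ has size at least 2'', and replace the case enumeration by the counting $r(M^\ast)=n-3\leq m\leq n/2$, which forces the simplification of $M^\ast$ to be free and hence $M$ to be a direct sum of circuits. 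This buys self-containedness (you prove rather than cite the structural fact), it produces the complete-intersection structure uniformly instead of by inspection of a list, and it makes transparent exactly where $\dim\Delta\leq 2$ enters (the inequality $n\leq 2\rk\Delta$, which indeed fails for the rank-4 complex of Example~\ref{ex:compl}). One small gap of completeness: the lemma assumes only $\dim\Delta\leq 2$, while your count is written for rank exactly 3; the same argument with $r(M^\ast)=n-\rho$ gives $n\leq 2\rho$ and settles ranks 1 and 2 even more easily (as the paper does explicitly for rank 2), so this is cosmetic, and your handling of the $n=4$ case --- where the counting alone permits $m=2$ and the loopless rank-1 observation is genuinely needed to force $m=1$ --- is exactly right.
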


\begin{proof}
 (1) Suppose that $\Gamma= \link_\Delta(v)$ is a cone, say with apex
  $w$.  Then, by the purity of $\Delta$ and $\Gamma$, $F$ is a facet
  of $\Gamma$ if and only if $F\cup\{v\}$ is a facet of $\Delta$.  It
  follows that for any facet $G$ of $\Delta$ containing $v$, we have
  $w\in G$ and $G-\{w\}\in\Delta_{-w}$. Since $\Delta$ is a matroid
  and not a  cone, $\Delta_{-w}$ is a pure complex of the same
  dimension as $\Delta$.  Thus
  $G-\{w\}$ is contained in a facet of $\Delta_{-w}$, say $H$, of
  dimension equal to the dimension of $G$ (which is also the dimension
  of $\Delta$). Therefore $H$ is also a
  facet of $\Delta $. But since $H$ contains $v$, it must also contain
  $w$, which is a contradiction.

 (2) Since $\Delta_{-w}$ is a cone for any vertex $w$, it is a
  standard fact that the vertices of $\Delta$ can be partitioned into
  {series classes}, say $S_1, \dots, S_k$, where
  (since $\Delta$ is not a cone) $|S_i| \ge 2$ (see e.g.\ \cite{Ch2,Sto}). Also, each facet in
  $\Delta$ contains at least $|S_i| - 1 \ge 1$ elements in $S_i$, for
  each $i$.  If $\rk\Delta=2$, then $k=1$ or $k=2$.  If $k=1$ then $\abs{S_1}=3$
  and $\Delta$ is the boundary of a 2-simplex.  If $k=2$ then
  $\abs{S_1}=\abs{S_2}=2$ and $\Delta$ is a 4-cycle.  Both of these
  are complete intersections.

  If $\rk \Delta=3$, we have $k \le 3$. If $k = 1$, then $|S_1| = 4$
  (and $\Delta$ is the boundary complex of a tetrahedron). If $k = 2$,
  since each facet of $\Delta$ has 3 elements, the only possibility
  (after re-indexing) is that $|S_1| = 3$ and $|S_2| = 2$ (that is,
  $\Delta$ is a bi-pyramid over an unfilled triangle). If $k = 3$,
  then similarly we must have $|S_1| = |S_2| = |S_3| = 2$ (and
  $\Delta$ is the boundary complex of an octahedron). In any case, it
  can easily be seen that $I_\Delta$ is a complete intersection, as
  desired.
\end{proof}

\begin{example}\label{ex:compl}
The assumption $\dim\Delta\leq 2$ is necessary in
Lemma~\ref{lemma:sssttt}. The smallest example of a dimension 3
matroid $\Delta$ which is not a cone or a complete intersection, and
such that the deletion of any vertex of $\Delta $ yields a cone, has
Stanley-Reisner ideal
\[I_\Delta=\ideal{x_1 x_2 x_5 x_6,x_1 x_2 x_3 x_4,x_3 x_4 x_5
  x_6}\subseteq k[x_1,x_2,x_3,x_4,x_5,x_6].\]
Notice that the
$h$-vector of $\Delta $ is $h''=(1,2,3,4,2)$, which can easily be seen
to satisfy Stanley's conjecture.
\end{example}

\begin{notation}
In what follows, for a simplicial complex $\Omega $, $\init I_\Omega$
indicates the smallest degree of a minimal non-linear generator of
$I_\Omega $.  For matroid complexes this is also the smallest cardinality
of a circuit that is not a loop.
\end{notation}

\begin{lemma}\label{bcbc}
Suppose $\Delta$ is a 2-dimensional matroid complex with $\init
I_\Delta\geq 3$. Then $\Delta$ satisfies Stanley's conjecture.
\end{lemma}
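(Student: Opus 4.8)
The plan is to compute $h(\Delta)$ explicitly from the hypothesis and then write down, by hand, a pure order ideal realizing it. First I would remove loops (vertices $v$ with $x_v\in I_\Delta$): deleting them affects neither the rank, nor the $h$-vector, nor $\init I_\Delta$, so I may assume $\Delta$ has none. Since $\init I_\Delta\geq 3$, the ideal $I_\Delta$ has no quadratic generator, and with no loops this forces every pair of vertices to be an edge; hence $f_1=\binom{n}{2}$, where $n=f_0$. Substituting $f_{-1}=1,\ f_0=n,\ f_1=\binom{n}{2}$ into \eqref{eq.fhvectors} with $d=3$ and setting $r=n-2$, a short calculation gives
\[h(\Delta)=\left(1,\ r-1,\ \binom{r}{2},\ h_3\right),\qquad h_3=f_2-\binom{n}{2}+n-1.\]
The useful observation is that $\left(1,\,r-1,\,\binom{r}{2}\right)$ is precisely the Hilbert function through degree $2$ of the polynomial ring $k[y_1,\dots,y_{r-1}]$; that is, $h_1$ and $h_2$ are forced to be maximal.

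Because $h_1$ and $h_2$ are maximal, any pure order ideal with this $h$-vector must contain every monomial of degree $\leq 2$ in $y_1,\dots,y_{r-1}$, so the whole problem reduces to choosing $h_3$ monomials of degree $3$ whose degree-$2$ divisors exhaust all $\binom{r}{2}$ degree-$2$ monomials (call such a family a \emph{covering}); the order ideal it generates is then automatically pure with $h$-vector $h(\Delta)$. If $h_3=0$ (equivalently, $\Delta$ is a cone, by Remark~\ref{lem.cone}) the truncated polynomial ring in degrees $\leq 2$ is already such a pure order ideal, and we are done. So I may assume $h_3\geq 1$, i.e.\ that $\Delta$ is not a cone, hence coloopless.

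Next I would bound $h_3$ from both sides. Since $h(\Delta)$ is an $O$-sequence, Macaulay's theorem \cite{BH,macau} applied to $h_2=\binom{r}{2}$ gives $h_3\leq\binom{r+1}{3}$, which is exactly the number of degree-$3$ monomials available. For the lower bound, $\Delta$ being coloopless lets me apply the Brown--Colbourn inequalities (Lemma~\ref{bc}) at $j=3$, $\alpha=1$, which give
\[h_3\ \geq\ h_0-h_1+h_2\ =\ \binom{r-1}{2}+1.\]
Finally I would exhibit an explicit covering of size exactly $\binom{r-1}{2}+1$, namely
\[\left\{\,y_i^{2}y_j\ :\ 1\leq i<j\leq r-1\,\right\}\ \cup\ \left\{\,y_{r-1}^{3}\,\right\};\]
here $y_i^2y_j$ covers the square $y_i^2$ together with the edge $y_iy_j$, so all edges and all squares $y_1^2,\dots,y_{r-2}^2$ are covered, and $y_{r-1}^3$ covers the last square. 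Since $\binom{r-1}{2}+1\leq h_3\leq\binom{r+1}{3}$, and since adjoining any further degree-$3$ monomial to a covering leaves it a covering, I can enlarge this family to a covering of size exactly $h_3$; the order ideal it generates is the desired pure order ideal, proving that $h(\Delta)$ is a pure $O$-sequence.

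The crux of the argument --- and the reason the hypothesis $\init I_\Delta\geq 3$ enters --- is the numerical coincidence that the Brown--Colbourn lower bound on $h_3$ equals the least possible size of a covering: were $h_3$ allowed to drop below $\binom{r-1}{2}+1$, no pure order ideal of this forced shape could exist, so verifying that the two thresholds match is the heart of the proof. A secondary point to get right is the clean separation of the cone case ($h_3=0$), where Brown--Colbourn must \emph{not} be invoked, since a coloop produces a trailing zero in the $h$-vector.
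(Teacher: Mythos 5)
Your proof is correct, and its key step genuinely differs from the paper's. Both arguments begin the same way: reduce to the non-cone case $h_3>0$ (the paper, like you, quarantines the cone case before invoking Lemma~\ref{bc}, since the inequality fails in the presence of a trailing zero), observe that $h(\Delta)=\left(1,h_1,\binom{h_1+1}{2},h_3\right)$ with $h_1=r-1$ in your indexing, and extract the lower bound $h_3\geq\binom{r-1}{2}+1$ from Brown--Colbourn at $j=3$, $\alpha=1$. The divergence is in how the interval $\left[\binom{r-1}{2}+1,\binom{r+1}{3}\right]$ of possible values of $h_3$ is handled. The paper constructs a single pure order ideal realizing only the minimal value (in variables $y_1,\dots,y_{h_1}$, maximal monomials $y_{h_1}^3$ and $y_{h_1}M$ with $M$ ranging over the degree-$2$ monomials in the first $h_1-1$ variables), notes that the truncated polynomial ring realizes the maximal value, and then fills in all intermediate values by citing the Interval Conjecture in socle degree $3$ (Lemma~\ref{lemma:ICP3}, a nontrivial theorem from \cite{BMMNZ}). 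You avoid the ICP entirely: since any superset of a covering is again a covering, and a covering of size $m$ generates a pure order ideal with $h$-vector $\left(1,r-1,\binom{r}{2},m\right)$, your extendable-covering construction realizes every admissible $h_3$ at once. This makes your argument more elementary and self-contained --- in effect you reprove by hand exactly the instance of the interval property that the paper imports --- at the cost of a slightly longer construction; the paper's route is shorter given the ICP machinery.

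One caveat about your closing commentary, which does not affect the validity of the proof: the claim that the Brown--Colbourn bound equals the least possible size of a covering, so that no pure order ideal of this shape could exist for $h_3<\binom{r-1}{2}+1$, is false. Coverings can be much smaller: for $r-1=3$, the family $\{y_1^2y_2,\ y_2^2y_3,\ y_3^2y_1\}$ covers all six degree-$2$ monomials, so $(1,3,6,3)$ is a pure $O$-sequence even though $\binom{r-1}{2}+1=4$. What your proof actually needs --- and has --- is only the easy direction: a covering of size exactly $\binom{r-1}{2}+1$ exists, so every $h_3$ at or above the Brown--Colbourn bound is realizable. It is the matroid hypothesis, via Lemma~\ref{bc}, that rules out the smaller values of $h_3$, not any non-existence of pure order ideals of that shape.
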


\begin{proof}
The $h$-vector of $\Delta$ is of the form
$h=\left(1,r,\binom{r+1}{2},h_3\right)$. Since $\Delta$ is matroid, we
may assume that $h_3>0$ (i.e.,\ $\Delta $ is not a cone), otherwise
the result is trivial. From the Brown-Colbourn inequalities
(Lemma~\ref{bc}) with $j=d=3$ and $\alpha =1$, we obtain
\[h_3\geq \binom{r+1}{2}-r+1=\binom{r}{2}+1.\]
Since, clearly, $h_3\leq \binom{r+2}{3}$ and
$\left(1,r,\binom{r+1}{2},\binom{r+2}{3}\right)$ is a pure
$O$-sequence, by Lemma~\ref{lemma:ICP3} it suffices to show that
$H=\left(1,r,\binom{r+1}{2},\binom{r}{2}+1\right)$ is a pure
$O$-sequence. Let us consider the pure order ideal $X$ in variables
$y_1,\dots ,y_r$ whose maximal monomials are $y_r^3$ and the
$\binom{r}{2}$ monomials of the form $y_r\cdot M$, where $M$ ranges
among all degree 2 monomials in $y_1,\dots ,y_{r-1}$. One moment's
thought gives that the $h$-vector of $X$ is indeed $H$, as desired.
\end{proof}

\begin{remark}\label{rmrk:minimal-h-vector}
It can be proved with a considerably more technical argument that, in
fact, under the hypotheses of the previous lemma, $h_3=0$ or $h_3\geq
\binom{r+1}{2}-1$.
\end{remark}

Given a simplicial complex $\Omega$, we use $\skeleton{i}{\Omega}$ to
denote its \emph{$i$-skeleton}, that is, the simplicial complex given
by the faces of $\Omega$ of dimension at most $i$.

\begin{lemma}\label{lemma:hvector-ineq-bad-case}
Let $\Delta$ be a 2-dimensional matroid complex with $\init
I_\Delta=2$.  Then, for any vertex $v$ such that $x_v$ divides a
minimal generator of $I_\Delta$ of degree 2, we have, for $i\leq 2$,
\[h(\link_\Delta(v))_i\leq h(\Delta_{-v})_i\]
and
\[\Delta h(\link_\Delta(v))_i\leq \Delta h(\Delta_{-v})_i.\]
\end{lemma}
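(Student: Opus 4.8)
The plan is to pass to $f$-vectors and exploit the parallel class of $v$. Since $v$ lies in a $2$-circuit it is not a coloop, so $D:=\Delta_{-v}$ has dimension $2$ and $L:=\link_\Delta(v)$ has dimension $1$; I will also assume $\Delta$ is not a cone, which is the only case relevant to the induction using this lemma. Write $n=f_0(\Delta)$, let $p\ge 2$ be the number of vertices $u$ with $\{u,v\}\notin\Delta$ together with $v$ itself (the parallel class of $v$), and set $s=n-p$, the number of vertices $u$ with $\{u,v\}\in\Delta$. Putting $h(L)=(1,l_1,l_2)$ and $h(D)=(1,d_1,d_2,d_3)$, all the asserted inequalities (for $i=0,1,2$) follow from two of them: (a) $l_1\le d_1$, which simultaneously yields the first-difference inequality in degree $1$; and (c) $l_2-l_1\le d_2-d_1$, the first-difference inequality in degree $2$. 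The remaining inequality $l_2\le d_2$ is then obtained by adding (a) and (c). For (a), I would use the standard conversions $l_1=f_0(L)-2$ and $d_1=f_0(D)-3$; since $f_0(L)=s$ and $f_0(D)=n-1$, this gives $d_1-l_1=(n-4)-(s-2)=p-2\ge 0$.

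The crux is (c). Using $l_2-l_1=f_1(L)-2f_0(L)+3$ and $d_2-d_1=f_1(D)-3f_0(D)+6$ and substituting the values of $f_0$, inequality (c) becomes
\[f_1(D)-f_1(L)\ge s+3p-6.\]
I would evaluate the left-hand side combinatorially. Every edge $\{a,b\}$ of $L$ satisfies $\{v,a,b\}\in\Delta$, hence is an edge of $D$; so $f_1(D)-f_1(L)$ counts the edges $\{a,b\}$ of $D$ for which $\{v,a,b\}\notin\Delta$. Each such pair either has exactly one of $a,b$ in the parallel class of $v$, of which there are $(p-1)s$, or else has $a,b$ both outside that class with $\{v,a,b\}$ dependent; writing $B\ge 0$ for the number of the latter, $f_1(D)-f_1(L)=(p-1)s+B$. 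After substitution, the target inequality is equivalent to
\[(p-2)(s-3)+B\ge 0.\]

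Since $p\ge 2$ and $B\ge 0$, this holds as soon as $s\ge 3$, so the whole argument reduces to showing that a non-cone forces $s\ge 3$; this is the step I expect to be the main obstacle. One always has $s\ge 2$ in rank $3$, because the parallel class of $v$ has rank $1$ and at least two further vertices are needed to reach rank $3$. If $s=2$, let $b_1,b_2$ be the two vertices not in the parallel class of $v$; then the vertices other than $b_2$ (the parallel class of $v$, of rank $1$, together with $b_1$) span rank at most $2$, and since $\Delta$ has rank $3$ this makes $b_2$ a coloop and $\Delta$ a cone, a contradiction. Hence $s\ge 3$, which closes (c) and finishes the proof. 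As a conceptual check on the scheme, $L=\link_{\Delta_{-v}}(w)$ for any $w$ in the parallel class of $v$, so $L$ is genuinely a subcomplex of $D$, consistent with the edge count used above.
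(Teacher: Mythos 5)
Your proof is correct, and it takes a genuinely different route from the paper's. The paper argues in two stages: it first reduces to the case where $I_\Delta$ has no degree $3$ generators (passing to an auxiliary matroid built from $I_{\langle 2\rangle}$ plus all squarefree degree $4$ monomials, and transferring the inequalities down through skeleta), and then, for an ideal generated in degree $2$, computes $h_1$ and $h_2$ of link and deletion explicitly in terms of the parallel classes $W_1,\dots,W_t$, finishing with a binomial-coefficient inequality. You instead work directly on $\Delta$ with $f$-vectors: in your identity $f_1(\Delta_{-v})-f_1(\link_\Delta(v))=(p-1)s+B$, any degree $3$ generator of $I_\Delta$ divisible by $x_v$ simply feeds into the nonnegative term $B$, so the paper's entire reduction step becomes unnecessary and the lemma collapses to $(p-2)(s-3)+B\ge 0$. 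Your conversions check out ($l_1=s-2$, $d_1=n-4$, the degree $2$ difference inequality being equivalent to $f_1(\Delta_{-v})-f_1(\link_\Delta(v))\ge s+3p-6$), as does the step that a rank $3$ non-cone forces $s\ge 3$: a basis meets the parallel class of $v$ at most once, so $s=2$ would make both remaining vertices coloops. One point deserves emphasis: the extra hypothesis you flagged --- that $\Delta$ is not a cone --- is not merely convenient but necessary, because the lemma as stated in the paper actually fails for cones. Take $I_\Delta=(x_1x_2,x_1x_3,x_2x_3)\subseteq k[x_1,\dots,x_5]$ (three mutually parallel vertices joined with two coloops) and $v=1$, so $p=3$, $s=2$: then $h(\link_\Delta(v))=(1,0,0)$ and $h(\Delta_{-v})=(1,1,0,0)$, whence $\Delta h(\link_\Delta(v))_2=0>-1=\Delta h(\Delta_{-v})_2$; correspondingly, the paper's own closing step, rewriting its binomial inequality as $w_1+(w_1+1)+\dots+(s-1)\ge 2+3+\dots+(s-w_1+1)$, degenerates exactly when the codimension is smaller than $w_1$, which is this cone situation. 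Since Theorem~\ref{thrm:stanley-2dim} only invokes the lemma after assuming $\Delta$ is not a cone, your restriction is exactly the right fix: your approach buys a shorter, self-contained counting argument and a correct statement, at the harmless price of adding a hypothesis the paper's formulation omitted.
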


\begin{proof}
Clearly, the desired inequalities on the $h$-vectors follow from those
on their first differences, so we only need to show that, for $i\leq 2$,
\begin{equation}\label{eqeq}
\Delta h(\link_\Delta(v))_i\leq \Delta h(\Delta_{-v})_i.
\end{equation}
Since $\dim\Delta=2$, then the codimension of $k[\Delta]$ is $r = n-3$,
where as usual $n$ is the number of vertices in $\Delta$.  Consider
first the case where $I =I_\Delta$ has no degree 3 generators.  Let
$J=I_{\langle 2\rangle}$ be the ideal generated by the degree 2
generators of $I$, and let $\Gamma$ be its corresponding complex.
Then $\Gamma$ is a matroid. Indeed, by the circuit exchange property,
we only need to notice that if $x_i x_j,x_j x_k\in J$, then $x_i x_k$
is a minimal generator of $I$, and thus is in $J$.


Let $v$ be any vertex such that $x_v$ divides a minimal generator of
$I$ of degree 2.  We will prove the inequalities (\ref{eqeq}) on the first
three entries of the $h$-vectors of the link and the deletion in
$\Gamma$.  Assuming those, we now show that the inequalities (\ref{eqeq})
for $\Delta $ will follow. Indeed, if $\Delta=\Gamma$ (equivalently,
$I_\Delta$ has no degree 4 minimal generators), then we are done.
Hence suppose $\Delta\neq\Gamma$. It can easily be seen that, for any
complex $\Omega $, the first three entries of the $h$-vector of any of
its skeletons $\skeleton{j}{\Omega}$ are the consecutive sums of the
corresponding entries of $h(\skeleton{j+1}{\Omega})$. Moreover, we
have the equalities
$\skeleton{j-1}{\link_\Omega(v)}=\link_{\skeleton{j}{\Omega}}(v)$ and
$\skeleton{j}{\Omega_{-v}}=(\skeleton{j}{\Omega})_{-v}$. Thus, by
induction, starting from $\Gamma $, the  inequalities (\ref{eqeq}) follow
for any $j$-skeleton of $\Gamma$, and in particular for the 2-skeleton
$\Delta$, and we are also done.

Therefore, for the case where $I$ has no degree 3 generators, it
remains to show that, for $i\leq 2$,
\begin{align}
\Delta h(\link_\Gamma(v))_i & \le \Delta h(\Gamma_{-v})_i. \label{eq.degree2}
\end{align}
Recall that $\Gamma$ is a matroid whose Stanley-Reisner ideal $J$ is
generated in degree 2. Using the circuit exchange property, there
exist pairwise disjoint subsets of variables, $W_1, \dots, W_t$, such
that $J$ is generated by all the squarefree degree 2 monomials coming
from the $W_j$'s (in the language of matroid theory, these latter are
known as the \emph{parallel classes} of $\Delta$).




Let $w_j=\abs{W_j}$. Note that $ w_j\geq 2 $. Without loss of
generality, assume that $v \in W_1$. Then $I_{\Gamma_{-v}}$ is
generated by all squarefree degree 2 monomials of $W_1 -
\{v\},W_2,\dotsc , W_t$, whereas $I_{\link_\Gamma(v)}$ is generated by
all squarefree degree 2 monomials of $W_2,\dotsc,W_t$. For small
degrees, we can compute the $h$-vectors of $\Delta_{-v}$ and
$\link_\Delta(v)$ by subtracting the number of generators from the
$h$-vectors of the corresponding polynomial rings. In particular, by
letting $s$ be the codimension of $\Gamma $, we have
\begin{align*}
& h_1(\link_\Gamma(v))=s-w_1+1, \ h_2(\link_\Gamma(v))=
          \binom{s-w_1+2}{2}-\binom{w_2}{2}-\dotsm-\binom{w_t}{2}, \\
& h_1(\Gamma_{-v})=s-1, \ \text{\ and\ } h_2(\Gamma_{-v})=
          \binom{s}{2}-\binom{w_1-1}{2}-\binom{w_2}{2}-\dotsm-\binom{w_t}{2}.
\end{align*}
For $i=1$, the inequality (\ref{eq.degree2}) is trivial, since $w_1\ge
2$. For $i=2$, it is equivalent to $\binom{s}{2}-\binom{w_1-1}{2}+1
\ge \binom{s-w_1+2}{2}+w_1-1$, i.e., $\binom{s}{2} - \binom{w_1}{2}
\ge \binom{s-w_1+2}{2} -1$. But this is clearly the same as
\[w_1+(w_1+1)+\dots +(s-1)\ge 2+3+\dots +(s-w_1+1),\]
which is true since $w_1\geq 2$.

Let us now turn to the general case, where $I$ may have degree 3
generators.  Let $L$ be the ideal obtained by adding all squarefree
degree 4 monomials to $I_{\langle 2\rangle}$ and let $\Delta'$ be the
simplicial complex associated to $L$. Clearly, $\dim \Delta' =
2$. Since $L_{\langle 2 \rangle}=I_{\langle 2\rangle}$, by the circuit
exchange property in $I$, we again have that the simplicial complex
$\Gamma'$ associated to $L_{\langle 2\rangle}$ is a matroid. In
particular, since $\Delta'$ is the 2-skeleton of $\Gamma'$, it is also
a matroid. Notice also that since $x_v$ divides a degree 2 generator
of $I$, neither $\Delta$ nor $\Delta'$ is a cone with apex $v$. This
implies that $\dim \Delta'_{-v} = \dim \Delta_{-v} = 2$. Also, by the
pureness of the complexes, $\dim \link_\Delta(v) = \dim
\link_{\Delta'}(v) = 1$.

Observe that any degree 3 generator $u$ of $I$ that does not have
$x_v$ in its support is also a degree 3 generator of $I+x_v$ and
$I\colon x_v$; on the other hand, if the support of $u$ contains $x_v$
then then $\tfrac{u}{x_v}$ is a degree 2 element of $I\colon
x_v$. This implies that the existence of a degree 3 generator in $I$
does not change the first three entries of the Hilbert function of
$\Delta_{-v}$, does not change the first two entries of the Hilbert
function of $\link_\Delta(v)$, and does not decrease the third entry
of the Hilbert function of $\link_\Delta(v)$.

It now follows
that $h_i(\Delta'_{-v}) = h_i(\Delta_{-v})$ for $i\leq 2$,
$h_1(\link_{\Delta'}(v)) =h_1(\link_{\Delta}(v))$ and
$h_2(\link_{\Delta'}(v)) \ge h_2(\link_{\Delta}(v))$.
Since $h_i(\Delta)=h_i(\Delta_{-v})+h_{i-1}(\link_\Delta(v))$, we have
$h_1(\Delta)=h_1(\Delta')$, and it
suffices to prove the inequalities (\ref{eqeq}) for $\Delta'$. But
this is true by the previous case, where the matroid had no degree 3
generators.  This concludes the proof of the lemma.
\end{proof}

We are now ready to establish Stanley's conjecture for matroids of rank 3.

\begin{theorem}\label{thrm:stanley-2dim}
Stanley's conjecture holds for rank 3 (i.e., 2-dimensional) matroids.
\end{theorem}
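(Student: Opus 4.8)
The plan is to argue by induction on the number of vertices $n$, exactly along the link/deletion recursion announced at the start of Section~3. Given a $2$-dimensional matroid complex $\Delta$, the base cases (small $n$, or $\Delta$ a cone) are handled directly: if $\Delta$ is a cone, then by Remark~\ref{lem.cone}(3) its $h$-vector agrees with that of a non-cone matroid of one lower dimension, so we may assume $\Delta$ is not a cone and has socle degree $d=3$ by Remark~\ref{lem.cone}(4). For the inductive step I would pick a well-chosen vertex $v$ and use the shifted-sum decomposition $h_i(\Delta)=h_i(\Delta_{-v})+h_{i-1}(\link_\Delta(v))$. By Remark~\ref{lem.cone}(1)--(2), both $\Delta_{-v}$ and $\link_\Delta(v)$ are matroid complexes on fewer vertices, so by induction their $h$-vectors $h=h(\Delta_{-v})$ and $h'=h(\link_\Delta(v))$ are pure $O$-sequences. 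The goal is then to feed these into Theorem~\ref{2} (which is Conjecture~\ref{ccc} in socle degree $\le 3$) to conclude that their shifted sum $h(\Delta)$ is a pure $O$-sequence.

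The crux is verifying the two hypotheses of Conjecture~\ref{ccc} for a suitable $v$: the entrywise inequality $h_i'\le h_i$ for $i\le e-1$, and the first-difference inequality $(\Delta h')_i\le(\Delta h)_i$ for $i\le\lceil e/2\rceil$. I would split into cases according to $\init I_\Delta$. When $\init I_\Delta\ge 3$, there is no degree-$2$ circuit and Lemma~\ref{bcbc} already settles Stanley's conjecture outright, so that case requires no induction. When $\init I_\Delta=2$, I would choose $v$ to be a vertex dividing a degree-$2$ minimal generator of $I_\Delta$; this is precisely the hypothesis of Lemma~\ref{lemma:hvector-ineq-bad-case}, which hands me both required inequalities for $i\le 2$ at once. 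Thus the main technical obstacle—comparing the link and deletion $h$-vectors and their first differences—has been isolated and resolved in that lemma, and the theorem follows by applying Theorem~\ref{2}.

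The remaining gap is the possibility that \emph{no} such vertex $v$ exists, i.e.\ that $\Delta_{-w}$ is a cone for every vertex $w$; in that situation the recursion cannot be driven down usefully. Here I would invoke Lemma~\ref{lemma:sssttt}(2): for $\dim\Delta\le 2$, if every vertex-deletion is a cone then $\Delta$ is a complete intersection, and complete intersection matroids satisfy Stanley's conjecture by Remark~\ref{gorci}. So this degenerate branch is handled separately and completely. The hard part of the whole argument is really the correct bookkeeping of the inductive hypotheses and ensuring the chosen $v$ simultaneously keeps both smaller complexes matroidal (via Remark~\ref{lem.cone}) and non-degenerate enough to apply Lemma~\ref{lemma:hvector-ineq-bad-case}; once $v$ is chosen as a vertex on a degree-$2$ circuit, everything assembles from the lemmas already proved. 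I would close by noting that combining the three cases—$\init I_\Delta\ge 3$ (Lemma~\ref{bcbc}), all deletions cones (Lemma~\ref{lemma:sssttt}(2) and Remark~\ref{gorci}), and the generic case (Lemma~\ref{lemma:hvector-ineq-bad-case} feeding Theorem~\ref{2})—exhausts all $2$-dimensional matroids and completes the induction.
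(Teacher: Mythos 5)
There is a genuine gap in your degenerate branch, and it sits exactly where the paper's proof is hardest. For the generic step you need a vertex $v$ satisfying \emph{two} conditions simultaneously: $x_v$ must divide a degree-$2$ minimal generator of $I_\Delta$ (so that Lemma~\ref{lemma:hvector-ineq-bad-case} applies), and $\Delta_{-v}$ must not be a cone (so that $h(\Delta_{-v})$ is a pure $O$-sequence of socle degree $3$; if $\Delta_{-v}$ were a cone its $h$-vector would end in degree $2$, and the pair $\left(h(\Delta_{-v}),h(\link_\Delta(v))\right)$ would not have the shape $(1,h_1,h_2,h_3)$, $(1,h_1',h_2')$ required by Theorem~\ref{2} with $e=3$). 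The negation of ``such a $v$ exists'' is \emph{not} ``$\Delta_{-w}$ is a cone for every vertex $w$''; it is only that every vertex lying on a degree-$2$ circuit has cone deletion, and Lemma~\ref{lemma:sssttt}(2) does not cover this weaker condition. Indeed it does not force $\Delta$ to be a complete intersection: take $I_\Delta=(x_1x_2)+J$, where $J$ is generated by all squarefree degree-$3$ monomials in $x_3,\dots,x_n$ and $n\ge 6$. This $\Delta$ is a rank-$3$ matroid, not a cone and not a complete intersection, with $\init I_\Delta=2$; the only vertices on degree-$2$ circuits are $1$ and $2$, and both $\Delta_{-1}$ and $\Delta_{-2}$ are cones (with apexes $2$ and $1$, respectively), while other deletions such as $\Delta_{-3}$ are not cones. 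So your dichotomy ``admissible $v$ exists, or all deletions are cones'' does not exhaust all $2$-dimensional matroids, and this family falls through your case analysis.

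The paper closes precisely this hole with substantial extra work: if no admissible $v$ exists, then every degree-$2$ generator $x_ix_j$ gives a parallel pair whose elements lie in nontrivial series classes, $\Delta$ splits as a join, and a short structural analysis shows $\Delta$ is either a complete intersection (handled by Remark~\ref{gorci}) or, after re-indexing, exactly the family $I_\Delta=(x_1x_2)+J$ above; for the latter one computes $h(\Delta)=\left(1,r,\binom{r}{2}+r-1,\binom{r}{2}\right)$ and exhibits an explicit pure order ideal realizing it. Everything else in your proposal---the reduction to non-cones, the case $\init I_\Delta\ge 3$ via Lemma~\ref{bcbc}, and feeding Lemma~\ref{lemma:hvector-ineq-bad-case} into Theorem~\ref{2} (your induction on the number of vertices, rather than the paper's induction on the number of degree-$2$ generators, is an innocuous variant)---tracks the paper's argument; the missing exceptional family is the one substantive defect.
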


\begin{proof}
We may assume that $\Delta$ is not a complete intersection, since
Corollary~\ref{gorci} already took care of this case. We may also
assume that $\Delta$ is not a cone since Stanley's conjecture holds
for 1-dimensional matroids (see, e.g.,
\cite{BMMNZ,Sto,Sto2}). Moreover, the case $\init I_\Delta \geq 3$ has
been dealt with in Lemma~\ref{bcbc}.  Thus, we will assume that $\init
I_\Delta=2$.

By Lemma~\ref{lemma:sssttt}, there exists a vertex $v$ such that
$\Delta_{-v}$ and $\link_\Delta(v)$ are not cones. Additionally, we
claim that with essentially one exception, we can also choose a vertex
$v$ so that $x_v$ divides a degree 2 minimal generator of $I_{\Delta}$.

Suppose that we cannot choose such a vertex $v$. Then for every
minimal generator $x_i x_j \in I_\Delta$, $\Delta_{-i}$ and
$\Delta_{-j}$ must both be cones (otherwise, replace $v$ by $i$ or
$j$). That is, $S = \{i,j\}$ is a pair of \emph{parallel elements}
each belonging to a nontrivial (i.e.,\ of cardinality at least 2)
series class. It is a standard fact in matroid theory that this is the
case only if these series classes contain $S$, and $\Delta$ is the
join (i.e.,\ the \emph{direct sum}) of the restrictions of $\Delta$ to
$S$ and to its complement, $\bar{S}$, in $\Delta$.

Suppose $I_{\restrict{\Delta}{\bar{S}}}=I_{\bar{S}}$ has a minimal
generator of degree $2$, say $x_k x_l$. Hence, by a similar argument,
$\{k,l\}$ belongs to a series class, and $\restrict{\Delta}{\bar{S}}$
is the join of $\{\{k\},\{l\}\}$ with a dimension 0 matroid. If the
complement of $\{k,l\}$ in $\bar{S}$ contains exactly 1 vertex, then
$\Delta$ is a complete intersection. If this complement has at least 2
vertices, then since each facet of $\Delta$ must contain at least 1
vertex from each series class, $\restrict{\Delta}{\bar{S}}$ must be a
square. Thus, $\Delta$ is the boundary complex of a octahedron, which
is again a complete intersection. Now assume that $I_{\bar{S}}$ has no
minimal generators of degree 2. Since $\restrict{\Delta}{\bar{S}}$ is
a dimension 1 matroid, this implies that, after a re-indexing,
$I_\Delta = (x_1x_2) + J$, where $J$ is the ideal generated by all
squarefree degree 3 monomials in $\{x_3, \dots, x_n\}$, and $n \ge 6$
 (if $n=5$ then $(x_1x_2) + J$ is a complete intersection).

Let $\Gamma$ be the simplicial complex corresponding to $J$, and let
$I = I_\Delta$. Observe that $I+\ideal{x_1}=J+\ideal{x_1}$ and
$I:x_1=J+\ideal{x_2}$ are isomorphic and have the same $h$-vector as
$J$. Thus, the $h$-vector of $\Delta$ is the shifted sum of
$h(\Gamma)$ with itself.  It is easy to compute that $h(\Gamma) =
\left(1,r-1,\binom{r}{2}\right)$, and so
$$h(\Delta)=\left(1,r,\binom{r}{2}+r-1,\binom{r}{2}\right).$$

In order
to prove that $h(\Delta)$ is a pure $O$-sequence, consider the pure
order ideal in variables $y_1, \dots , y_r$ whose maximal monomials
have the form $y_i\cdot M$, where $M$ is a degree 2 squarefree
monomial and $i$ is the smallest index of a variable dividing
$M$. These are $\binom{r}{2}$ monomials of degree 3, and dividing by a
variable, they give all degree 2 monomials in $y_1,\dots , y_r$ except
$y_r^2$. The number of these degree 2 monomials is
$\binom{r+1}{2}-1=\binom{r}{2}+r-1$. This proves that $h(\Delta)$ is a
pure $O$-sequence. As a consequence, this shows that Stanley's
conjecture holds in the exceptional case where a vertex $v$ as claimed
could not be picked.

Now assume that there exists a vertex $v$ such that $\Delta_{-v}$ and
$\link_\Delta(v)$ are not cones, and $x_v$ divides a minimal degree 2
generator of $I_\Delta$. Notice that $I_{\Delta_{-v}}$ has fewer
generators of degree 2 than $I_\Delta$. By induction on the number of
degree 2 generators, $h(\Delta_{-v})$ is a pure $O$-sequence. Since
$\link_\Delta(v)$ is a matroid of dimension 1, $h(\link_\Delta(v))$ is
also a pure $O$-sequence. Moreover, by
Lemma~\ref{lemma:hvector-ineq-bad-case}, the $h$-vectors of
$\link_\Delta(v)$ and $\Delta_{-v}$ satisfy the hypotheses of
Theorem~\ref{2}. Therefore $h(\Delta)$ is a pure $O$-sequence, which
concludes the proof of the theorem.
\end{proof}

Our approach to prove Stanley's conjecture in dimension 2 consisted of
showing that, for all matroids outside some special classes for which
we could control the $h$-vectors, the $h$-vectors of link and deletion
with respect to a suitably chosen vertex satisfy the hypotheses of
Conjecture~\ref{ccc}, which in turn we proved in socle degree 3. In a
similar fashion, assuming Conjecture~\ref{ccc} is true in general, it
can be seen that Stanley's conjecture holds for all matroid complexes
$\Delta $ in a set $\aleph$ defined inductively by the following two
conditions:
\begin{enumerate}[(i)]
\item $\Delta $ is not a cone.
\item If $\Delta $ is not a complete intersection, there exists a vertex $v$ of
$\Delta $ such that $\link_\Delta(v)$ and $\Delta_{-v}$ are both in
$\aleph $, and the $h$-vectors $h'$ of $\link_\Delta(v)$ and $h$ of
$\Delta_{-v}$ satisfy the hypotheses of Conjecture~\ref{ccc}.
\end{enumerate}

We conclude our paper by briefly outlining a  possible future research direction to finally tackle Stanley's conjecture in full generality, using the method of this paper. In order to generalize our result from rank 3 to the arbitrary case, one now wants to find a reliable assumption on pure $O$-sequences which implies Stanley's conjecture after being applied inductively on all matroid $h$-vectors (with the possible exception of some special class of matroids for which it is possible to control the $h$-vectors).

Let us assume that Stanley's conjecture holds for {all matroids whose deletions} with respect to any vertex {are cones} (for instance, this is not too difficult to show  in rank 4 with arguments very similar to those of this paper). Then, using our approach, we easily have that {Stanley's conjecture holds in general} if, for example, the following two natural (but still too bold?) assumptions are true:

\begin{enumerate}[(a)]
\item \emph{A matroid $h$-vector is differentiable for as long as it is nondecreasing.}
\item \emph{Suppose the {shifted sum} $h''$ of two pure $O$-sequences is differentiable for as long as it is nondecreasing. Then {$h''$ is  a pure $O$-sequence.}}
\end{enumerate}

Notice that part (b), if true, appears to be  difficult to prove in arbitrary socle degree, given that very little is known on the ``second half'' of a pure $O$-sequence, and that unlike the first half, this can behave very pathologically (see \cite{BMMNZ}). As for part (a), it would also be of considerable independent interest to show this fact \emph{algebraically}. That is, proving that a $g$-element, that Swartz  showed to exist up until the  first half of a matroid $h$-vector (see \cite{Sw} for details), does in fact carry on for as long as the $h$-vector is increasing.

\section*{Acknowledgments}\label{sec:acknowledgements}
We warmly thank Ed Swartz for a number of extremely useful comments on a previous version of this work, which in particular helped us make
some of the technical results on matroids simpler and better-looking. We also wish to thank  an anonymous reader who suggested a much shortened proof of Proposition~\ref{mac}, and the referee for very helpful comments. Part of this work was developed in Spring 2009 and Spring 2010 during visits of the first and the third authors to Michigan Tech and Tulane, respectively. The two authors wish to thank those institutions for their support and hospitality. The first author is partially supported by the Board of Regents grant LEQSF(2007-10)-RD-A-30.



\begin{thebibliography}{llll}
\bibitem{bjorner} A. Bjoerner: \emph{Homology and shellability of matroids and geometric lattices}, in \cite{Wh2}, 123--225.
\bibitem{BMMNZ} M. Boij, J. Migliore, R. Mir\`o-Roig, U. Nagel, and
  F. Zanello: \lq \lq On the shape of a pure $O$-sequence'', {Mem. Amer. Math. Soc.} \textbf{218} (2012), no. 2024, vii + 78 pp..
\bibitem{BC} J.I. Brown and C.J. Colbourn: \emph{Roots of the reliability polynomial}, SIAM J. Discrete Math. \textbf{5} (1992), no. 4, 571--585.
\bibitem{BH} W. Bruns and J. Herzog: ``Cohen-Macaulay rings'',
  Cambridge studies in advanced mathematics \textbf{39}, Cambridge Univ.
  Press, Cambridge (1993).
\bibitem{Ch} M.K. Chari: \emph{Matroid inequalities}, Discrete
  Math. \textbf{147} (1995), 283--286.
\bibitem{Ch2} M.K. Chari: \emph{Two decompositions in topological
  combinatorics with applications to matroid complexes},
  Trans. Amer. Math. Soc. \textbf{349} (1997), no. 10, 3925--3943.
\bibitem{CoVa} A. Constantinescu and M. Varbaro: \emph{$h$-vectors of matroid complexes}, preprint (2012).
\bibitem{delo} J. De Loera, Y. Kemper, and S. Klee: \emph{$h$-vectors of small matroid complexes}, Electron. J. Combin. \textbf{19} (2012), P14.
\bibitem{Ge} A.V. Geramita: \emph{Inverse Systems of Fat Points:  Waring's Problem, Secant Varieties and Veronese Varieties and Parametric Spaces of Gorenstein Ideals}, Queen's Papers in Pure and  Applied Mathematics, no. 102, The Curves Seminar at Queen's (1996), Vol. X, 3--114.
\bibitem{Ha} T. Hausel: \emph{Quaternionic geometry of matroids},
  Cent. Eur. J. Math. \textbf{3} (2005), no. 1, 26--38.
\bibitem{HS} T. Hausel and B. Sturmfels: \emph{Toric hyperK\"ahler
  varieties}, Doc. Math. \textbf{7} (2002), 495--534.
\bibitem{Hi} T. Hibi: \emph{What can be said about pure
  $O$-sequences?}, J. Combin. Theory Ser. A \textbf{50} (1989), no. 2,
  319--322.
\bibitem{IK} A. Iarrobino and V. Kanev: ``Power Sums, Gorenstein Algebras, and Determinantal Loci'', Springer Lecture Notes in  Mathematics (1999), no. 1721, Springer, Heidelberg.
\bibitem{macau} F.H.S. Macaulay: \emph{Some properties of enumeration
  in the theory of modular systems}, Proc. London
  Math. Soc. \textbf{26} (1927), 531--555.
\bibitem{Me} C. Merino: \emph{The chip firing game and matroid
  complexes}, Discrete models: combinatorics, computation, and
  geometry (2001), Discrete Math. Theor. Comput. Sci. Proc., AA,
  Maison Inform. Math. Discrete. (MIMD), Paris (2001), 245--255.
\bibitem{MNRV} C. Merino, S.D. Noble, M. Ram\'irez, and R. Villarroel: \emph{On the structure of the $h$-vector of a paving matroid}, preprint. Available on the arXiv at \url{http://arxiv.org/abs/1008.2031}.
\bibitem{MS} E. Miller and B. Sturmfels: ``Combinatorial commutative
  algebra'', Graduate Texts in Mathematics \textbf{227}, Springer-Verlag,
  New York (2005).
\bibitem{NN} D.L. Neel and N.A. Neudauer: \emph{Matroids you have
  known}, Math. Mag. \textbf{82} (2009), no. 1, 26--41.
\bibitem{Oh} S. Oh: \emph{Generalized permutohedra, $h$-vectors of cotransversal matroids and pure $O$-sequences}, preprint. Available on the arXiv at \url{http://arxiv.org/abs/1005.5586}.
\bibitem{Ox} J.G. Oxley: ``Matroid theory'', Oxford University Press
  (2006).
\bibitem{Pr} N. Proudfoot: \emph{On the $h$-vector of a matroid
  complex}, unpublished note (2002).
\bibitem{Sch} J. Schweig: \emph{On the $h$-Vector of a Lattice Path
  Matroid}, Electron. J. Combin. \textbf{17} (2010), no. 1, N3.
\bibitem{St1} R. Stanley: \emph{Cohen-Macaulay Complexes}, in ``Higher
  Combinatorics'' (M. Aigner, Ed.), Reidel, Dordrecht and Boston (1977),
  51--62.
\bibitem{St3} R. Stanley: ``Combinatorics and commutative algebra'',
  Second Ed., Progress in Mathematics \textbf{41}, Birkh\"auser Boston,
  Inc., Boston, MA (1996).
\bibitem{St2000} R. Stanley: \emph{Positivity problems and conjectures in algebraic combinatorics}, in ``Mathematics: Frontiers and Perspectives'' (V. Arnold, M. Atiyah, P. Lax, and B. Mazur, Eds.), Amer. Math. Soc., Providence, RI (2000), 295--319.
\bibitem{Sto} E. Stokes: ``The $h$-vectors of matroids and the
  arithmetic degree of squarefree strongly stable ideals'',
  Ph.D. Thesis, University of Kentucky (2008). Available at \url{http://archive.uky.edu/bitstream/10225/908/Erik_Stokes-dissertation.pdf}.
\bibitem{Sto2} E. Stokes: \emph{The $h$-vectors of 1-dimensional
  matroid complexes and a conjecture of Stanley}, preprint. Available
  on the arXiv at \url{http://arxiv.org/abs/0903.3569}.
\bibitem{Sw} E. Swartz: \emph{$g$-elements of matroid complexes},
  J. Combin. Theory Ser. B \textbf{88} (2003), 369--375.
\bibitem{Sw2} E. Swartz: \emph{$g$-elements, finite buildings, and
  higher Cohen-Macaulay connectivity}, J.  Combin. Theory Ser. A
  \textbf{113} (2006), no. 7, 1305--1320.
\bibitem{Wh} N. White, Ed.: ``Theory of Matroids'', Encyclopedia of
  Mathematics and Its Applications \textbf{26}, Cambridge Univ. Press,
  Cambridge (1986).
\bibitem{Wh2} N. White, Ed.: ``Matroids Applications'', Encyclopedia
  of Mathematics and Its Applications \textbf{40}, Cambridge Univ. Press, Cambridge
  (1992).
\bibitem{Za2} F. Zanello: \emph{Interval Conjectures for level Hilbert functions}, J. Algebra \textbf{321} (2009), no. 10, 2705--2715.
\end{thebibliography}
\end{document}